  \theoremstyle{plain}
  \newtheorem{Theorem}{Theorem}[section]
  \newtheorem{Lemma}{Lemma}[section]
  \numberwithin{equation}{section}
  \numberwithin{figure}{section}
\renewcommand{\baselinestretch}{1.00}
\begin{document}

\title{Monge-Amp\`ere type equations with Neumann boundary conditions on Riemannian manifolds}

\author{Xi Guo}
\address{Faculty of Mathematics and Statistics, Hubei Key Laboratory of Applied Mathematics, Hubei University,  Wuhan 430062, P.R. China}
\email{guoxi@hubu.edu.cn}
\author{Jing Mao}
\address{Faculty of Mathematics and Statistics, Hubei Key Laboratory of Applied Mathematics, Hubei University,  Wuhan 430062, P.R. China}
\email{jiner120@163.com,jiner120@tom.com}
\author{Ni Xiang}
\address{Faculty of Mathematics and Statistics, Hubei Key Laboratory of Applied Mathematics, Hubei University,  Wuhan 430062, P.R. China}
\email{nixiang@hubu.edu.cn}


\date{\today}

\keywords{second derivative estimate, Monge-Amp\`ere type equation, Neumann problem, Riemannian manifold. }

\abstract {In this paper, we consider the global regularity for Monge-Amp\`ere type equations with the Neumann boundary conditions on Riemannian manifolds. It is known that the
classical solvability of the Neumann boundary value problem is
obtained under some necessary assumptions. Our main result extends the main theorem from the case of Euclidean space $R^n$ in \cite{JTX} to Riemannian manifolds.}
\endabstract

\maketitle


\baselineskip=12.8pt
\parskip=3pt
\renewcommand{\baselinestretch}{1.38}

\section{Introduction}\label{Section 1}

\vskip10pt
The main purpose of this paper is to study the Neumann boundary value problem for the Monge-Amp\`ere type equations on Riemannian manifolds. Let $(M^n, g)$ be an $n\geq 2$ dimensional smooth Riemannian manifold. $S_2M^n$ is the bundle of symmetric (0,2) tensor on
$M^n$, and $\Omega\subset M^n$ is a compact domain with smooth boundary $\partial\Omega$.
We consider the equation
\begin{equation}\label{1.1}
\det[(\nabla^2u-A(x,u,\nabla u))g^{-1}]=B(x,u,\nabla u) \ \  in \ \ \Omega,
\end{equation}
together with the Neumann boundary condition
\begin{equation}\label{1.2}
\nabla_\nu u = \varphi(x,u) \quad \mbox{on } \  \partial\Omega,
\end{equation}
where $A: \bar{\Omega}\times\mathbb{R}\times T_xM^n\rightarrow S_2M^n$,
$B\geq0$ is $C^\infty$ with respect to $(x,z,p)\in\bar{\Omega}\times\mathbb{R}\times T_xM^n$. Here $T_x M^n$ denotes the tangent space at $x\in M^n$, and $\nu$ is the unit inner normal vector field on
$\partial\Omega$. As customary $\nabla u$ and $\nabla^2u$ denote respectively the gradient vector and Hessian matrix of second derivatives of $u$. A solution $u$ is elliptic, (degenerate elliptic), if the matrix ${\nabla^2u-A(x,u,\nabla u)}$ is positive, (non negative) definite.

There has been considerable research activity in recent years devoted to fully nonlinear elliptic, second order partial differential equations of the form (\ref{1.1}), which arise in applications, notably in optimal transportation \cite{MTW2005, TruWang2009} and also in reflector and refractor shape design problems \cite{GH2009, Wang1996, Wang2006}.

The Yamabe problem on manifold with boundary was studied by Escobar \cite{ES}, he showed that almost every compact Riemannian manifold was equivalent to the constant scalar curvature manifold, whose boundary was minimal. In fact the problem can be reduced to solve the semilinear elliptic critical Sobolev exponent equation with Neumann boundary condition.
The Neumann boundary problem of linear and quasilinear elliptic equation was widely studied for a long time, readers can see the recent book written by Lieberman\cite{Lie2013}. The motivation of studying the Neumann boundary value problem for Monge-Amp\`ere type equations comes from its application in conformal geometry. Such a prescribed mean curvature problem in conformal geometry was first proposed in \cite{LiLi2006}.

There are also many known results about the fully nonlinear elliptic equations on Riemannian manifolds. For example, Guan and Li in \cite{GL1996} extended the well-known result for Monge-Amp\`ere equation with Dirichlet boundary condition in $\mathbb{R}^n$. For more results, we refer readers to the  articles \cite{ AZ,Guan2014,GS} and references therein.

In this paper, we shall derive a priori second-order estimates for solutions of the Neumann boundary value problem \eqref{1.1}-\eqref{1.2} on Riemannian manifolds. It is well known that these estimates yield regularity and existence results. For this aim, the regularity of solutions depends on the behaviour of the matrix $A$ with respect to the $p$ variables. We call the matrix $A$ regular if $A$ is co-dimension one convex with respect to $p$, in the sense that
\begin{equation}\label{regular}
\nabla_{p_kp_l}A_{ij}(x,z,p)\xi_i\xi_j\eta_k\eta_l \geq 0,
\end{equation}
for all $(x,z,p)\in\Omega\times \mathbb{R}\times T_xM^n$,
$\xi,\eta\in T_xM^n$, $\xi\perp\eta$.
 If \eqref{regular} is replaced by
\begin{equation}\label{regular3}
\nabla_{p_kp_l}A_{ij}(x,z,p)\xi_i\xi_j\eta_k\eta_l \geq c_0|\xi|^2|\eta|^2,
\end{equation}
for some $c_0>0$, then the matrix $A$ is called strictly regular. Conditions (\ref{regular}) and (\ref{regular3}) were introduced in \cite{MTW2005, Tru2006} and called A3w, A3 respectively. Loeper in \cite{Loeper2009} showed that the condition A3w was indeed the necessary and sufficient condition for regularity. One can not expect regularity without this condition. A case of special interest for applications is the generalization of Brenier's cost to Riemannian manifolds. Existence and uniqueness of optimal maps in that case was established by McCann \cite{Mc}.

As with \cite{JT2014,Tru2014}, we also need monotone assumptions about $A$, $B$ and $\varphi$. The matrix $A$ is non-decreasing (strictly increasing) with respect to $z$, if
\begin{equation}\label{monotone A}
\nabla_zA_{ij}(x,z,p)\xi_i\xi_j\ge 0 (>0),
\end{equation}
for all $(x,z,p)\in\Omega\times \mathbb{R}\times T_xM^n $, $\xi\in \mathbb{R}^n$.
The inhomogeneous term $B$ is non-decreasing (strictly increasing) with respect to $z$, if
\begin{equation}\label{monotone B}
\nabla_zB(x,z,p)\ge 0 (>0),
\end{equation}
for all $(x,z,p)\in\Omega\times \mathbb{R}\times T_x M^n$. The function $\varphi$ defined on the boundary is called non-decreasing (strictly increasing) with respect to $z$, if
\begin{equation}\label{monotone varphi}
\nabla_z\varphi(x,z)\ge 0 (>0),
\end{equation}
for all $(x,z)\in\partial\Omega\times \mathbb{R}$.

As well, we need to assume a kind of global barrier condition called the uniformly $A$-convexity in \cite{Tru2006} for the domain $\Omega$, namely that there exists a defining function $\phi\in C^2(\bar \Omega)$,
satisfying $\phi=0$ on $\partial \Omega$, $\nabla\phi \neq 0$ on
$\partial \Omega$ and $\phi<0$ in $\Omega$, together with the
inequality
\begin{equation}\label{domain A convexity}
\nabla_{ij}\phi - \nabla_{p_k}A_{ij}(x,u,\nabla u)\nabla_k\phi \ge \delta_0 I,
\end{equation}
in a neighbourhood $\mathcal{N}$ of $\partial \Omega$, where $\delta_0$ is a positive constant, $I$
denotes the identity matrix. The inequality \eqref{domain A convexity} is
trivially satisfied in the standard Monge-Amp\`ere case which can be easily seen by taking $\phi(x)=|x|^2$ in $R^n$. For the Monge-Amp\`ere equations on manifolds, (\ref{domain A convexity}) is a natural condtion for existence of global smooth solutions, called existence of a geodesic convex function on $\Omega$ by Hong in \cite{Hong1992}. By virtue of the uniformly $A$-convexity of the domain
$\Omega$, for example,
\begin{equation}
\phi = -a d + b d^2,
\end{equation}
where $a$ and $b$ are positive constants and $d(x)\triangleq dist
(x,\partial \Omega)$ denotes the distance function for $\Omega$, see
\cite{GTbook,TruWang2009} for reference.

In order to achieve the second order derivative estimate under the necessary natural condition \eqref{regular}, we need to assume the existence of a supersolution to the problem \eqref{1.1}-\eqref{1.2} satisfying
\begin{eqnarray}
&\det [(\nabla^2\bar u-A(x,\bar u,\nabla\bar u))g^{-1}]  \le  B(x,\bar u,\nabla\bar u)   & \quad \mbox{in } \ \Omega,\label{super equation}\\
&              \nabla_\nu \bar u  =  \varphi(x,\bar u) & \quad \mbox{on } \  \partial\Omega, \label{super boundary}
\end{eqnarray}
and a subsolution to the problem \eqref{1.1}-\eqref{1.2} satisfying
\begin{eqnarray}
&\det [(\nabla^2\underline{u} -A(x,\underline{u} ,\nabla\underline{u} ))g^{-1}]  \ge  B(x,\underline{u} ,\nabla\underline{u} )   & \quad \mbox{in } \ \Omega,\label{sub equation}\\
&              \nabla_\nu \underline{u}   =  \varphi(x,\underline{u}) & \quad \mbox{on } \  \partial\Omega. \label{sub boundary}
\end{eqnarray}

We now formulate the main results of this paper. First, the global second derivative estimate can be obtained as follows.
\begin{Theorem}\label{Th1.1}
Assume that $u\in C^4(\Omega)\cap C^{3}(\bar \Omega)$ is an elliptic solution of the problem \eqref{1.1}-\eqref{1.2} in a $C^{3,1}$ uniformly $A$-convex domain $\Omega\subset\mathbb{M}^n$, where $A\in C^2(\bar \Omega\times \mathbb{R}\times T_x M^n)$ is regular and non-decreasing, $B>0,\in C^2(\bar \Omega\times \mathbb{R}\times T_x M^n)$ and $\varphi\in C^{2,1}(\partial \Omega\times \mathbb{R})$ are both non-decreasing. Suppose that there exists an elliptic supersolution $\bar u\in C^2(\bar \Omega)$ satisfying \eqref{super equation}-\eqref{super boundary}.
Then we have the estimate
\begin{equation}\label{global C2 bound}
\sup\limits_\Omega |\nabla^2 u|\le C,
\end{equation}
where $C$ is a constant depending on $n$, $A$, $B$, $\Omega$, $\bar u$, $\varphi$, $\delta_0$ and $|u|_{1;\Omega}$.
\end{Theorem}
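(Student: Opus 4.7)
The plan is to establish the bound by the standard two-step reduction: first show that the maximum of $|\nabla^2 u|$ over $\bar\Omega$ is controlled by its maximum on $\partial\Omega$, and then estimate the Hessian on the boundary. Throughout I would work with the ``good'' matrix
\begin{equation*}
W_{ij}:=\nabla_{ij}u-A_{ij}(x,u,\nabla u),
\end{equation*}
whose determinant equals $B\cdot \det g$. Since $B$ is strictly positive and $|u|_{1;\Omega}$ is already controlled by hypothesis, a one-sided eigenvalue bound on $W$ at every point of $\bar\Omega$ automatically yields a two-sided bound, hence the desired estimate on $|\nabla^2 u|$.

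For the interior reduction I would apply the maximum principle to an auxiliary function of the form $v(x,\xi)=\log W_{\xi\xi}+\eta(|\nabla u|^2)+\beta u$, where $\xi$ ranges over unit tangent vectors and $\eta,\beta$ are to be chosen. At an interior maximum $x_0$, extending $\xi$ by parallel transport along geodesics from $x_0$, differentiating the logarithm of the equation twice in the direction $\xi$, and linearizing yields an inequality for $W_{\xi\xi}$. Commuting covariant derivatives produces curvature contributions that are bounded in terms of the Riemann tensor of $(M,g)$ and $|u|_1$. The key cancellation is of the term $F^{ij}A_{ij,p_kp_l}u_{k\xi}u_{l\xi}$ coming from the $p$-dependence of $A$: the regularity condition \eqref{regular} gives it the correct sign when $\xi$ is chosen orthogonal to the direction singled out by the gradient, and the monotonicity conditions \eqref{monotone A}, \eqref{monotone B} together with the concavity of $\log\det$ let one absorb the remaining bad terms to conclude $W_{\xi\xi}(x_0)\le C$.

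For the boundary estimate one splits $W$ at a point $x_0\in\partial\Omega$ into its tangential-tangential, tangential-normal and normal-normal parts. The pure tangential part $W_{\tau\tau}$ is controlled by differentiating the Neumann condition \eqref{1.2} twice along $\partial\Omega$: the second fundamental form of $\partial\Omega$ and the monotonicity \eqref{monotone varphi} of $\varphi$ combine to give a direct bound. For the mixed tangential-normal part I would construct a barrier of the form $\Theta=\pm T(u-\bar u)+\alpha\phi-Kd^2$, where $\phi$ is the defining function from \eqref{domain A convexity}, $d$ is the distance to $\partial\Omega$, and $T$ is a tangential first-order operator chosen so that $T(u-\bar u)$ vanishes on $\partial\Omega$. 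Applying the linearized operator $L$ and exploiting the $\delta_0 I$ term in \eqref{domain A convexity} together with the supersolution inequality \eqref{super equation} produces $L\Theta\le 0$ in the neighbourhood $\mathcal{N}$ of $\partial\Omega$; the maximum principle then yields $|W_{\tau\nu}|(x_0)\le C$. The normal-normal component $W_{\nu\nu}$ is finally recovered from the equation $\det(Wg^{-1})=B$ once the other components are bounded, using that the tangential block of $W$ is non-degenerate thanks to the strict positivity of $B$.

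The principal obstacle will be the tangential-normal barrier construction. Unlike the Dirichlet case, the Neumann condition \eqref{1.2} ties the normal derivative of $u$ to $\varphi(x,u)$, and differentiating it along $\partial\Omega$ couples all components of $\nabla^2 u$; one must carefully balance the bad terms produced by this coupling against the margin supplied by the uniform $A$-convexity constant $\delta_0$. The Riemannian setting adds the further difficulty that commutators $[\nabla_\nu,\nabla_\tau]$ generate curvature contributions which must be absorbed by the same $\delta_0$ margin, and the dependence of $A$ on $p\in T_xM^n$ introduces parallel-transport corrections absent from the Euclidean proof of \cite{JTX}. These are the precise points at which the extension to Riemannian manifolds requires work beyond a direct transcription of the flat argument.
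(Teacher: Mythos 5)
Your outline follows the Dirichlet-problem template, and at two decisive points it does not work for this Neumann problem under the weak regularity condition \eqref{regular}. First, in the interior reduction your auxiliary function $\log W_{\xi\xi}+\eta(|\nabla u|^2)+\beta u$ omits the supersolution entirely, but the supersolution is exactly what makes the global estimate possible here: differentiating the equation twice produces error terms proportional to $\mathcal{T}=\sum_i w^{ii}$ (the trace of the \emph{inverse} of $w_{ij}=\nabla_{ij}u-A_{ij}$), and since $\det w=B\det g$ is bounded, $\mathcal{T}$ can be as large as $w_{11}^{\,n-1}$ and is not controlled by $\operatorname{tr} w$ or by the zeroth-order term $\beta\mathcal{L}u$. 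The paper's Lemma \ref{lem1} (the Jiang--Trudinger Pogorelov-type lemma) uses the strict supersolution, the concavity of $\log\det$, the monotonicity of $A$ and the regularity condition applied to $\nabla(\bar u-u)$ (with the quadratic gradient term $Kw^{ij}\nabla_i v\nabla_j v$ absorbing the non-orthogonal component) to produce $\mathcal{L}e^{K(\bar u-u)}\ge\varepsilon\mathcal{T}-C$; multiplying this by a large $\beta$ is what absorbs all the $\mathcal{T}$-terms. Your remark that \eqref{regular} ``gives the correct sign when $\xi$ is chosen orthogonal to the gradient direction'' is not available: $\xi$ is the maximizing direction and cannot be chosen, and without the supersolution term the bad $\mathcal{T}$-terms remain unabsorbed.

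Second, your boundary scheme has the roles of the components essentially reversed. For a Neumann condition, one tangential differentiation of $\nabla_\nu u=\varphi(x,u)$ already bounds the mixed derivatives $\nabla_{\tau\nu}u$ pointwise (this is \eqref{9-26-5} in the paper), so no barrier is needed there; moreover your proposed barrier uses that $T(u-\bar u)$ vanishes on $\partial\Omega$, which is a Dirichlet feature and is false here, since $u$ and $\bar u$ only share the Neumann condition. Conversely, the pure tangential second derivatives are the hard part and are \emph{not} obtained by differentiating the boundary condition twice (that only relates third derivatives to second ones); the paper handles them with the Lions--Trudinger--Urbas auxiliary function $V(x,\xi)=e^{\alpha|\nabla(u-\lambda\phi)|^2+\beta\Phi}(w_{\xi\xi}-V'(x,\xi))$, where the correction $V'$ disposes of non-tangential directions (Subcase (ii)) and the inequality $\nabla_\nu(\bar u-a\phi-u)\ge a$ together with $\nabla_nV\le 0$ settles the tangential case (Subcase (iii)). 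Finally, the double normal derivative cannot be ``recovered from the equation once the other components are bounded'': $\det w\ge c>0$ gives no lower bound on the determinant of the tangential block unless $w_{\nu\nu}$ is already bounded, so that step is circular. The paper instead bounds $w_{\nu\nu}$ first, via the uniformly $A$-convex barrier $\phi$ and \eqref{domain A convexity}, obtaining $\nabla^2_{\nu\nu}u\le C(1+M_2)^{\frac{n-2}{n-1}}$ (Lemma \ref{lem2}), which is then fed into the global argument. You would need to restructure your proof around these three ingredients (supersolution-based trace term, $V'$-corrected LTU auxiliary function, barrier bound for $w_{\nu\nu}$) for the argument to close.
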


Due to Theorem \ref{Th1.1}, we obtain the classical existence theorem for $(\ref{1.1})$ and $(\ref{1.2})$ under further hypotheses for the solution bounds and the gradient estimates. For the solution estimates, we can assume the existence of bounded subsolutions and supersolutions by virtue of the comparison principle. Under a further structural assumption on the matrix $A$,
\begin{equation}\label{QS}
A(x,z,p)\ge - \mu_0 [1+ |p|^2],
\end{equation}
for all $x\in \Omega$, $|z|\le K$, $p\in T_x M^n$ and some positive constant $\mu_0$ depending on the constant $K$,
we can control the gradient of elliptic solution which has been proved in \cite{xg} and extends the results for the case of Euclidean space $R^n$ in \cite{JXX}.

Combining the second derivative bounds with the lower order estimates, we can get the global second derivative H\"{o}lder estimates as in \cite{LieTru1986,LT1986,LTU1986,Tru1984} and establish the existence result by the method of continuity.

\begin{Theorem}\label{Th1.2}
Under the assumptions of Theorem \ref{Th1.1}, suppose that either $A$, $B$ or $\varphi$ is strictly increasing.  Assume that condition (\ref{QS}) holds and there is an elliptic subsolution satisfying $(\ref{sub equation})$ and $(\ref{sub boundary})$. Then the Neumann boundary value problem
\eqref{1.1}-\eqref{1.2} has a unique elliptic solution $u\in
C^{3,\alpha}(\bar \Omega)$ for any $\alpha<1$.
\end{Theorem}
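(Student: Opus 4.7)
The plan is to apply the method of continuity, so the bulk of the work reduces to assembling a closed chain of a priori estimates $|u|_{C^{2,\alpha}(\bar\Omega)}\le C$ and a suitable openness argument. First I would establish the zeroth-order bound $\underline{u}\le u\le\bar u$ on $\bar\Omega$ by the comparison principle: the strict monotonicity of one of $A$, $B$, $\varphi$ together with the monotonicity hypotheses on the others in Theorem~\ref{Th1.1} is exactly what is needed to run the standard maximum-principle argument (the difference $u-\bar u$, resp.\ $\underline u - u$, cannot attain a positive interior maximum nor a boundary maximum compatible with the Neumann condition). This also yields uniqueness.

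Next I would invoke the gradient estimate proved in~\cite{xg} under the quadratic structural condition \eqref{QS}, which provides $\sup_\Omega|\nabla u|\le C$ depending only on the $C^0$ bound just obtained and on the data. With $|u|_{1;\Omega}$ controlled, Theorem~\ref{Th1.1} delivers the full second-order bound $\sup_\Omega|\nabla^2 u|\le C$. Since $u$ is elliptic and $B>0$, this $C^2$ bound makes the linearised operator uniformly elliptic with bounded measurable coefficients, and the oblique boundary operator $\nabla_\nu u-\varphi(x,u)$ is uniformly oblique and regular. I would then apply the Krylov--Evans interior estimate together with the Lieberman--Trudinger boundary Hölder estimates for fully nonlinear oblique problems (as used in \cite{LieTru1986,LT1986,LTU1986,Tru1984}, and already cited in the paper) to upgrade the $C^2$ bound to a global $C^{2,\alpha}$ bound for some $\alpha\in(0,1)$, and finally linear Schauder theory applied to the differentiated equation and boundary condition upgrades this to $|u|_{C^{3,\alpha}(\bar\Omega)}\le C$ for any $\alpha<1$ (using $A,B\in C^2$ and $\varphi\in C^{2,1}$).

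With the a priori estimate in hand I would deploy a one-parameter continuity family: for $t\in[0,1]$ set, e.g.,
\begin{equation*}
\det\bigl[(\nabla^2 u_t - A_t(x,u_t,\nabla u_t))g^{-1}\bigr] = B_t(x,u_t,\nabla u_t) \quad\text{in }\Omega, \qquad \nabla_\nu u_t = \varphi_t(x,u_t) \quad\text{on }\partial\Omega,
\end{equation*}
where $(A_1,B_1,\varphi_1)=(A,B,\varphi)$ and $(A_0,B_0,\varphi_0)$ is chosen so that $u_0=\underline u$ (or $\bar u$) is an elliptic solution and so that the structural conditions (regularity of $A_t$, monotonicity, strict monotonicity of one of the triples, the convexity condition on $\Omega$, the quadratic bound \eqref{QS}, and the existence of sub/super-solutions $\underline u,\bar u$) are preserved for every $t$. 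The set $T\subset[0,1]$ of parameters for which a $C^{3,\alpha}$ elliptic solution exists is then non-empty; closedness follows from the uniform $C^{3,\alpha}$ estimate applied along any sequence $t_k\to t$ via Arzelà--Ascoli, and openness follows from the implicit function theorem in Hölder spaces, since at an elliptic solution the linearisation is a uniformly elliptic operator with an oblique Neumann-type boundary condition whose invertibility between $C^{3,\alpha}(\bar\Omega)$ and $C^{1,\alpha}(\bar\Omega)\times C^{2,\alpha}(\partial\Omega)$ is guaranteed by the strict monotonicity ensuring zero kernel.

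I expect the main obstacle to be the global $C^{2,\alpha}$ bound near $\partial\Omega$: interior Evans--Krylov is standard, but the Neumann (oblique) boundary estimate for Monge--Ampère type operators on a Riemannian manifold requires flattening the boundary and straightening the obliqueness to apply the Lieberman--Trudinger machinery, and one must verify that the structural hypotheses are preserved under such a change of coordinates (this is where $C^{3,1}$ regularity of $\partial\Omega$ and $C^{2,1}$ regularity of $\varphi$ are used). A secondary technical point is the construction of the continuity path: one must interpolate between $(A,B,\varphi)$ and a trivially solvable triple while retaining regularity, monotonicity, uniform $A_t$-convexity of $\Omega$, and a common pair of sub/super-solutions; a convex combination together with a mild deformation of $B$ to a positive constant typically works, but the verification is the least mechanical step.
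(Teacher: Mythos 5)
Your proposal is correct and follows essentially the same route as the paper: comparison principle with the assumed sub/supersolutions for the $C^0$ bound and uniqueness, the gradient estimate of \cite{xg} under \eqref{QS}, Theorem \ref{Th1.1} for the second derivative bound, the Lions--Trudinger/Lieberman--Trudinger global $C^{2,\alpha}$ estimate, the method of continuity, and a Schauder bootstrap to $C^{3,\alpha}$. The only cosmetic difference is ordering: the paper runs the continuity method at the $C^{2,\alpha}$ level (citing \cite{GTbook}, Theorems 17.22 and 17.28, rather than spelling out the deformation path and the implicit function theorem) and upgrades regularity afterwards, whereas you assemble the $C^{3,\alpha}$ a priori bound first; both are standard and equivalent here.
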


The uniqueness of the solution follows from the comparison principle for the elliptic solution.
The regularity for the solution $u$ in Theorem \ref{Th1.2} can be improved by the linear elliptic theory \cite{GTbook} if the data are sufficiently smooth. For example, if $A$, $B$, $\varphi$ and $\partial \Omega$ are $C^\infty$, then $u\in C^\infty(\bar \Omega)$.

The paper is organized as follows.
Section \ref{Section 2} devotes to some preliminary results, such as a comparison principle for the Neumann problem \eqref{1.1}-\eqref{1.2}, the maximum modulus and the gradient estimate.
In Section \ref{Section 3}, we consider the second order derivative estimate in the interior of the domain and in a neighbourhood of the boundary successively. Then the proof of Theorem \ref{Th1.1} is given, which is crucial for this paper. In Section \ref{Section 4}, we provide the proof of Theorem \ref{Th1.2}.





\section{Preliminaries}\label{Section 2}

In this section, first we recall some formulae for commuting covariant derivatives on $M^n$. Then we study the maximum modulus and gradient bounds for elliptic solutions of the Neumann boundary value problem \eqref{1.1}-\eqref{1.2}. The maximum modulus is obtained from the assumed supersolution and subsolution by virtue of a comparison principle for the Neumann boundary value problem. The gradient bound has been established only using the ellipticity of the solution and a quadratic bound from below of the matrix $A$. In \cite{xg} we have obtained the gradient estimate for the degenerate elliptic solution of the problem \eqref{1.1}-\eqref{1.2},  by using the ellipticity of the solution and a quadratic bound from below
of the matrix A. Here we formulate the gradient estimate as a lemma without proof.

Let $(M^n, g)$ be an n-dimensional Riemannian manifold. Throughout the paper, $\nabla$ denotes the covariant differentiation on $M^n$. We choose a
local orthonormal vector field $\{e_1,\cdots, e_n\}$ adapted to the
Riemannian metric of $(M^n, g)$ with its dual coframe $\{\omega_1, \cdots,
\omega_n\}$. Then we have
\begin{equation}
\nabla u=\nabla_{j}u\omega_j,  \ \ \ \         \nabla_{e_i}\nu=\nabla_i\nu_ke_k,
\end{equation}
and
\begin{equation}
\nabla^2 u=\nabla_{ij}u\omega_i\omega_j,
\end{equation}
where
\begin{equation}
\nabla_{ij}u=\nabla_i(\nabla_j u)-(\nabla_i e_j)u.
\end{equation}
We recall that
\begin{equation}
\nabla_{ij}u=\nabla_{ji}u.
\end{equation}
From the Ricci identity, we have
\begin{equation}\label{ric}
\nabla_{ijk}u-\nabla_{jik}u=R_{lkji}\nabla_{l}u,
\end{equation}
where $R_{ijkl}$ is the component of the Riemannian curvature
tensor of $(M^n, g)$.
 The connection forms
$\{\omega_{ij}\}$ of $(M^n, g)$ are characterized by the structure
equations
$$d\omega_i=-\sum_{j}\omega_{ij}\wedge\omega_j,\ \
\omega_{ij}+\omega_{ji}=0,$$
$$d\omega_{ij}=-\sum_k\omega_{ik}\wedge
\omega_{kj}+\frac{1}{2}\sum_{k,l}R_{ijkl}\omega_k\wedge\omega_l.$$

We consider the distance function
\begin{equation}\label{disf}
d(x)=dist(x,x_0),
\end{equation}
in a small ball $B_r(x_0)=\{x\in\Omega,d(x)<r\}$. By choosing $r$ small enough we may assume $d^2(x)$ is smooth and
\begin{equation}
\{\delta_{ij}\}\leq\{\nabla_{ij}d^2\}\leq3\{\delta_{ij}\}
\end{equation}
in $B_r(x_0)$.

From the equation (\ref{1.1}), we have
\begin{equation}\label{9-8-1}
F[u]=\ln\det[(\nabla^2u-A(x,u,\nabla u))g^{-1}]=\tilde{B}(x,u,\nabla u),
\end{equation}
where $\widetilde{B}=\ln B$.
We still denote $$F^{ij}=\frac{\partial F}{\partial
w_{ij}}=w^{ij}\ \ , \ \ F^{ij,kl}=\frac{\partial^2 F}{\partial
w_{ij}\partial w_{kl}}=-w^{ik}w^{jl},$$ where
$\{w_{ij}\}\triangleq\{\nabla_{ij}u-A_{ij}\}$ denotes the augmented
Hessian matrix, and $\{w^{ij}\}$ denotes the inverse of the matrix
$\{w_{ij}\}$.

Now we consider the following linear operators of $F$
\begin{equation}\label{def-L}
L=w^{ij}(\nabla_{ij}-\nabla_{p_k}A_{ij}(.,u,\nabla u)\nabla_k),
\end{equation}
and
\begin{equation}
\mathcal{L}=L-\nabla_{p_k}\widetilde{B}\nabla_k.
\end{equation}
For convenience, we denote $\nabla_{\xi\eta}u\triangleq
\nabla_{ij}u\xi_i\eta_j$, $w_{\xi\eta}\triangleq w_{ij}\xi_i\eta_j =
\nabla_{ij}u\xi_i\eta_j-A_{ij}\xi_i\eta_j$ for any vectors $\xi$ and
$\eta$. As usual, $C$ denotes a constant depending on the known data
and may change from line to line in the context.

We begin with a comparison principle of the Neumann boundary value problem for the Monge-Amp\`ere type equation. We set
\begin{equation}\label{frak F}
\begin{array}{rll}
\mathfrak{F}[u]=\!&\!\!\det Mu - B(x,u,\nabla u), & {\rm for} \ x\in \Omega,\\
G(u)= \!&\!\! \nabla_\nu u- \varphi(x,u), &  {\rm for}\  x\in \partial \Omega.
\end{array}
\end{equation}
 Recall that $Mu=[\nabla^2u-A(x,u,\nabla u)]g^{-1}$ and a function $u$ is called an elliptic function of \eqref{1.1} if $Mu>0$. We recall the following comparison principle.

\begin{Lemma}\label{comparison}
Let $u$, $v$ be two elliptic functions of equation \eqref{1.1} satisfying
\begin{eqnarray}
\mathfrak{F}[u] \ge \mathfrak{F}[v] & x\in \Omega,\label{comparison equation}\\
G(u) \geq G(v) & x\in \partial \Omega. \label{comparison boundary}
\end{eqnarray}
 Assume that $A$ or $B$ are strictly increasing in $z$ and $G$ is strictly decreasing in $z$. Then we have
\begin{equation}\label{u le v}
u \le v, \  {\rm for} \  x \in \bar \Omega.
\end{equation}
\end{Lemma}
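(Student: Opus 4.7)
The plan is a standard maximum principle argument by contradiction. Set $w = u - v$ and suppose $M := \max_{\bar\Omega} w > 0$, attained at some $x_0 \in \bar\Omega$ by compactness. I would split into the two cases $x_0 \in \Omega$ and $x_0 \in \partial\Omega$, using the two strictness hypotheses independently at each.

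\emph{Interior maximum.} If $x_0 \in \Omega$, then $\nabla w(x_0) = 0$, and the covariant Hessian agrees with the coordinate Hessian at this critical point, giving $\nabla^2 w(x_0) \le 0$. Hence $\nabla u(x_0) = \nabla v(x_0)$, so $A(x_0,u,\nabla u) = A(x_0,u,\nabla v)$. Monotonicity of $A$ in $z$ together with $u(x_0) > v(x_0)$ yields $A(x_0, u, \nabla v) \ge A(x_0, v, \nabla v)$ as symmetric matrices, and combined with $\nabla^2 u \le \nabla^2 v$ this gives $Mu(x_0) \le Mv(x_0)$. Since $u, v$ are elliptic, both matrices are positive definite, so the monotonicity of $\det$ on the positive cone gives $\det Mu(x_0) \le \det Mv(x_0)$. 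Similarly $B(x_0, u, \nabla u) = B(x_0, u, \nabla v) \ge B(x_0, v, \nabla v)$. Therefore
\[
\mathfrak{F}[u](x_0) - \mathfrak{F}[v](x_0) = [\det Mu - \det Mv] - [B(x_0, u, \nabla u) - B(x_0, v, \nabla v)] \le 0,
\]
and this inequality is strict whenever $A$ or $B$ is strictly increasing in $z$: strictness of $A$ forces $Mu < Mv$ and hence $\det Mu < \det Mv$, while strictness of $B$ forces the second bracket to be strictly positive. Either way we contradict $\mathfrak{F}[u] \ge \mathfrak{F}[v]$.

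\emph{Boundary maximum.} If $x_0 \in \partial\Omega$, then because $\nu$ is the inner unit normal and $x_0$ is a maximum of $w$ on $\bar\Omega$, $w$ is nonincreasing along $\nu$, so $\nabla_\nu w(x_0) \le 0$. Strict monotonicity of $\varphi$ in $z$ and $u(x_0) > v(x_0)$ yield $\varphi(x_0, u) > \varphi(x_0, v)$, hence
\[
G(u)(x_0) - G(v)(x_0) = \nabla_\nu w(x_0) - [\varphi(x_0, u) - \varphi(x_0, v)] < 0,
\]
contradicting $G(u) \ge G(v)$ on $\partial\Omega$. Either contradiction forces $\max_{\bar\Omega}(u - v) \le 0$, which is the claim.

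The only point requiring a moment's care is the monotonicity of the determinant on the positive cone used in the interior step: if $0 < X \le Y$ then $\det X \le \det Y$, with strict inequality if $X < Y$. This is elementary (diagonalize $X^{-1/2} Y X^{-1/2}$), but it is exactly what converts the pointwise matrix and scalar comparisons into the operator inequality for $\mathfrak{F}$. Otherwise the argument is routine; the structural point worth emphasizing is that the two strictness assumptions are each used at a different type of maximum point, so neither alone would suffice.
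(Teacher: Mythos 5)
Your argument is correct, but it is a genuinely different route from the paper's. You argue by contradiction directly at a maximum point of $w=u-v$: at an interior maximum you use $\nabla w=0$, $\nabla^2 w\le 0$ (the Christoffel terms drop at a critical point, so this is fine on a manifold), the monotonicity of $\det$ on the positive cone, and the \emph{strict} monotonicity of $A$ or $B$ in $z$ to force $\mathfrak{F}[u](x_0)<\mathfrak{F}[v](x_0)$; at a boundary maximum you use $\nabla_\nu w(x_0)\le 0$ (inner normal) and the strict monotonicity of $\varphi$ to force $G(u)(x_0)<G(v)(x_0)$. The paper instead linearizes: it writes $\mathfrak{F}[u]-\mathfrak{F}[v]=a^{ij}\nabla_{ij}w+b^k\nabla_k w+cw\ge 0$ with $a^{ij}$ given by integrated cofactor matrices of $Mv+t(Mu-Mv)$, notes $c\le 0$ from the (non-strict) monotonicity of $A$ and $B$, writes the boundary inequality as $\nabla_\nu w-\varphi_z w\ge 0$ with $\varphi_z>0$, and then invokes Lieberman's comparison result (Lemma 1.2 of \cite{Lie book}) for linear uniformly elliptic oblique derivative problems. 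What each approach buys: yours is elementary and self-contained, needing no external maximum principle for oblique problems (and no strong maximum principle or Hopf lemma, since the interior strictness of $A$ or $B$ substitutes for them), whereas the paper's linearization only needs the non-strict interior monotonicity plus $\varphi_z>0$, with the Hopf-type machinery absorbed into the cited lemma, and the same linearized operator is what one reuses elsewhere (e.g.\ for uniqueness). One small point to make explicit in your write-up: your interior step uses that the \emph{other} of $A$, $B$ (the one not assumed strictly increasing) is still nondecreasing in $z$ — this is the paper's standing assumption \eqref{monotone A}--\eqref{monotone B} rather than part of the lemma's stated hypotheses, and the paper's own proof uses it in exactly the same implicit way to get $c\le 0$.
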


\begin{proof}
Set $w=u-v$. By a direct calculation, from \eqref{comparison equation}, we have
\begin{equation}\label{Lw ge 0}
\begin{array}{lll}
 0 \!&\! \le  \!&\! \displaystyle \mathfrak{F}[u]-\mathfrak{F}[v]\\
   \!&\!  =   \!&\! \displaystyle (\det Mu- \det Mv) - [B(x,u,\nabla u)-B(x,v,\nabla v)]\\
   \!&\!  =   \!&\! \displaystyle \int_0^1 \frac{d}{dt}\det[Mv + t(Mu-Mv)]dt - [B(x,u,\nabla u)-B(x,v,\nabla v)]\\
   \!&\!  =   \!&\! \displaystyle a^{ij}[\nabla_{ij}(u-v)-\nabla_{p_k}A_{ij}\nabla_k(u-v)-\nabla_zA_{ij}(u-v)] \\
   \!&\!      \!&\! \displaystyle - \nabla_{p_k}B\nabla_k(u-v)-\nabla_zB(u-v)\\
   \!&\!  =   \!&\! \displaystyle a^{ij}\nabla_{ij}w+b^k\nabla_kw+cw,
\end{array}
\end{equation}
where $a^{ij}=\int_0^1 C_t^{ij}dt$ and $C_t^{ij}$ is the cofactor of the element $[Mv + t(Mu-Mv)]$, $b^k=-(a^{ij}\nabla_{p_k}A_{ij}+\nabla_{p_k}B)$, $c=-(a^{ij}\nabla_zA_{ij}+\nabla_zB)$. From the boundary condition \eqref{comparison boundary}, we have
\begin{equation}\label{boundary le 0}
\begin{array}{lll}
 0 \!&\! \leq  \!&\! \displaystyle G(u)- G(v)\\
   \!&\!  =   \!&\! \displaystyle \nabla_\nu (u-v)-\varphi(x,u)+\varphi(x,v)\\
   \!&\!  =   \!&\! \displaystyle \nabla_\nu (u-v)-\varphi_z (x,\hat u)(u-v)\\
   \!&\!  =   \!&\! \displaystyle \nabla_\nu w-\varphi_z w,
\end{array}
\end{equation}
where $\hat u = \lambda u + (1-\lambda)v$ for some $\lambda\in (0,1)$ appearing by the mean value theorem. Note that the operator $\widetilde{L}=a_{ij}D_{ij}+ b^kD_k + c$ is linear and uniformly elliptic. Furthermore, by the monotonicity of both $A$ and $B$, we have $c\le 0$. Since $\varphi$ is strictly increasing, we have $\varphi_z >0$ on $\partial \Omega$. Then by Lemma 1.2 in \cite{Lie book}, $w\le 0$ in $\bar \Omega$, which leads to the conclusion \eqref{u le v}.

\end{proof}

From the comparison principle for the Neumann problem \eqref{1.1}-\eqref{1.2}, we infer the uniqueness of the solution of the problem \eqref{1.1}-\eqref{1.2} immediately.

Since we assume the existence of a $C^2$ supersolution $\bar u$ satisfying \eqref{super equation}-\eqref{super boundary} and a $C^2$ subsolution $\underline u$ satisfying \eqref{sub equation}-\eqref{sub boundary}, on the basis of Lemma \ref{comparison}, we already have an upper bound for the solution $u$, that is $u\le \bar u$ and a lower bound for the solution $u$, that is $u\geq \underline{u}$.

Next, we establish the gradient bound for elliptic solution in $\Omega$ satisfying the Neumann boundary condition. We omit its proof since it has been finished in our previous paper \cite{xg}.
\begin{Theorem}\label{Th2.1}
Let $\Omega$ be a compact domain in $(M^n, g)$ with smooth boundary, and $u$ be a degenerate elliptic solution of the Neumann problem \eqref{1.1}-\eqref{1.2}. Assume $A$ satisfies the structure condition
\begin{equation}\label{LQS}
A(x,u,\nabla u)\geq -\mu_0 (1+|\nabla u|^2)g,
\end{equation}
for all $x\in \overline{\Omega}$ and some positive constant $\mu_0$. Then we have the gradient estimate
\begin{equation}\label{GB}
\sup_{\overline{\Omega}}|\nabla u|\leq C,
\end{equation}
where $C$ depends on $n$, $g$, $\mu_0$, $\Omega$, $\varphi$ and $\sup_{\overline{\Omega}} |u|$.
\end{Theorem}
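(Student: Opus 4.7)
The plan is to apply the Bernstein technique via a maximum-principle argument to a suitable auxiliary function on $\bar\Omega$, splitting the analysis into an interior case and a boundary case. A natural choice is
\[
\Phi(x) = \tfrac{1}{2}\log\!\bigl(1+|\nabla u|^2\bigr) + \alpha u + \beta\psi(x),
\]
where $\alpha>0$ is a large constant and $\psi$ is a smooth boundary-aware function (for instance built from the distance $d(x,\partial\Omega)$ and from $\varphi$) designed so that $\beta\nabla_\nu\psi$ dominates the contributions coming from $\varphi$ on $\partial\Omega$. Since $u$ is only degenerate elliptic, the linearised operator $\mathcal{L}$ of (\ref{def-L}) need not be defined; I would first carry the argument out on the strictly elliptic approximations obtained by replacing $B$ with $B+\varepsilon$ and then pass to the limit in the resulting bound.

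If $\Phi$ attains its maximum at an interior point $x_0$, I would apply $\mathcal{L}$ to $\Phi$ at $x_0$ and use $\nabla\Phi(x_0)=0$ together with $\mathcal{L}\Phi(x_0)\le 0$. Twice-differentiating (\ref{9-8-1}) and applying the Ricci identity (\ref{ric}) produces a Bochner-type expression for $\mathcal{L}(|\nabla u|^2)$ whose leading good term $w^{ij}\nabla_{ik}u\,\nabla_{jk}u$ (summed in $k$) dominates the curvature and $A$-derivative error terms, all of which are at most quadratic in $|\nabla u|$. The lower bound $\nabla^2 u\ge A\ge -\mu_0(1+|\nabla u|^2)g$ from (\ref{LQS}) is then used to let the concave contribution $\alpha\mathcal{L}u$ absorb these errors once $\alpha$ is chosen large in terms of $\mu_0$ and the known data. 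The logarithm in $\Phi$ is essential here, as it tames the worst quadratic growth and keeps the coefficients in front of $|\nabla u|^2$ uniformly bounded.

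If the maximum is attained at a boundary point $x_0\in\partial\Omega$, Hopf's principle yields $\nabla_\nu\Phi(x_0)\le 0$ while the tangential derivatives of $\Phi$ vanish. Differentiating the Neumann condition $\nabla_\nu u=\varphi(x,u)$ tangentially expresses the tangential second derivatives of $u$ (hence the tangential components of $\nabla|\nabla u|^2$) in terms of $\varphi$, of $u$, and of the second fundamental form of $\partial\Omega$. Substituting these into the Hopf inequality and choosing $\beta$ large enough, depending on $\varphi$, $\partial\Omega$ and $\sup_{\bar\Omega}|u|$, forces a contradiction unless $|\nabla u|(x_0)$ is already controlled, which in either case yields (\ref{GB}).

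The main obstacle is the boundary analysis: the Neumann datum controls only $\nabla_\nu u$, so the tangential gradient must be reconstructed indirectly through tangential differentiation, and on a Riemannian manifold this introduces extra commutator and Christoffel contributions that have to be handled uniformly in $|\nabla u|$. A secondary difficulty is keeping track of the interplay between the structure constant $\mu_0$, the Riemannian curvature and the choice of $\alpha$ in the interior computation; selecting the right auxiliary function with a logarithmic dependence on $|\nabla u|^2$ is what makes this balance possible.
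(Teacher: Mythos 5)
Your route does not match what the theorem actually claims, and two of its steps have real gaps. First, note the dependency list in \eqref{GB}: the constant depends only on $n$, $g$, $\mu_0$, $\Omega$, $\varphi$ and $\sup_{\overline\Omega}|u|$ --- not on $A$ (beyond $\mu_0$) and not on $B$ at all --- and the hypothesis is only that $u$ is \emph{degenerate} elliptic. The paper (which omits the proof, citing \cite{xg}) states explicitly that the bound is obtained ``only using the ellipticity of the solution and a quadratic bound from below of the matrix $A$'', i.e.\ the pointwise inequality $\nabla^2 u\ge A\ge -\mu_0(1+|\nabla u|^2)g$ together with the Neumann condition; the equation itself is never differentiated and the linearized operator $\mathcal L$ never appears. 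Your Bernstein computation, by contrast, applies $\mathcal L$ to $\Phi$ and differentiates \eqref{9-8-1}, which unavoidably imports $\nabla_x\widetilde B,\nabla_z\widetilde B,\nabla_p\widetilde B$ and derivatives of $A$ into the estimate, and it needs quantitative control of the coefficients $w^{ij}$ (e.g.\ to make the good term $w^{ij}\nabla_{ik}u\nabla_{jk}u$ or the curvature errors $w^{ij}R_{\cdot}\nabla u\nabla u$ usable). No such control is available at this stage --- it is exactly what the second-derivative estimates of Section \ref{Section 3} later provide, and those in turn presuppose the gradient bound, so the argument is circular as well as producing constants the statement does not allow. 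The $\varepsilon$-regularization does not repair this: the given $u$ solves the original equation, not the one with $B+\varepsilon$, and producing approximating solutions $u_\varepsilon$ with uniform $C^1$ convergence begs the question. For the interior case the intended mechanism is much more elementary: at an interior maximum of an auxiliary function of the type $\log|\nabla u|^2+2\kappa u$, the \emph{first-order} condition alone gives $\nabla^2u(\nabla u,\nabla u)=-\kappa|\nabla u|^4$, which contradicts $\nabla^2u(\nabla u,\nabla u)\ge-\mu_0(1+|\nabla u|^2)|\nabla u|^2$ once $\kappa>\mu_0$, with no second-order maximum principle and no use of the PDE.

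Second, your boundary case has a concrete hole. At a boundary maximum you only get $\nabla_\nu\Phi(x_0)\le 0$, and
\begin{equation}
\nabla_\nu\Phi=\frac{\nabla^2u(\nabla u,\nu)}{1+|\nabla u|^2}+\alpha\,\nabla_\nu u+\beta\,\nabla_\nu\psi ,
\end{equation}
where $\nabla^2u(\nabla u,\nu)$ contains the term $\nabla_\nu u\,\nabla^2u(\nu,\nu)=\varphi\,\nabla^2u(\nu,\nu)$. Only a \emph{lower} bound on $\nabla^2u(\nu,\nu)$ is available from \eqref{LQS}; if $\varphi(x_0,u)$ has the unfavourable sign and $\nabla^2u(\nu,\nu)$ is large positive, this term is arbitrarily negative, the inequality $\nabla_\nu\Phi\le 0$ is trivially satisfied, and no contradiction (hence no bound on $|\nabla u|(x_0)$) can be extracted; a bounded correction $\beta\nabla_\nu\psi$, however large $\beta$, cannot absorb an unbounded term. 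This is precisely the difficulty that forces the known proofs (in the Euclidean case \cite{JXX}, and its manifold version \cite{xg}) to use a more carefully built auxiliary quantity that removes or neutralizes the normal--normal contribution (e.g.\ working with the tangential part of the gradient on $\partial\Omega$, or inserting factors tied to $\nabla_\nu u-\varphi$ and the distance function), rather than a generic $\beta\psi$ term. As written, both the interior and the boundary halves of your argument would need to be replaced, not merely completed.
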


\vspace{2mm}


\section{Second derivative estimates}\label{Section 3}

In this section, we shall employ a delicate auxiliary function for our discussion to derive the second order derivative estimate and complete the proof of Theorem \ref{Th1.1}. Note that we only need to get an upper bound for the second derivative, since the lower bound can be derived from the ellipticity condition $\nabla^2u-A>0$. The interior bound can be similarly derived as the interior Pogorelev estimate in \cite{GL1996,LiuTru2010}. While in the neighbourhood of the boundary, the proof is specific for the Neumann boundary value problem as in \cite{LTU1986}. Through out this section, we take full advantage of the assumed supersolution $\bar u$.

For the arguments below, we assume the functions
$\varphi$, $\nu$ can be smoothly extended to $\bar \Omega\times \mathbb{R}$ and $\bar \Omega$ respectively. We also assume that near the boundary, $\nu$ is extended to be constant in the normal directions.

Before we deal with the second derivative estimate, we recall a fundamental lemma in \cite{JT2014, JTY2013}, which is also crucial to construct the
second derivative estimate.
\begin{Lemma}\label{lem1}
Suppose that $u$ is an elliptic solution of \eqref{1.1}, and $\bar{u}$ is a strict elliptic supersolution of \eqref{1.1}. If $A$ is regular, then
\begin{equation}
\mathcal{L}(e^{K(\bar{u}-u)})\geq \varepsilon\sum_iw^{ii}-C
\end{equation}
holds in $B_r(x_0)$ for some positive constant $K$ and uniform positive constant $\varepsilon$, where $C$ is a positive constance depending on
$n$, $g$, $A$, $B$, $\Omega$, $\bar u$ and $|u|_{1;\Omega}$.
\end{Lemma}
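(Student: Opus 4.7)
The plan is to try the ansatz $\eta = e^{K(\bar u - u)}$ with $K>0$ to be chosen large later, and to work out $\mathcal{L}\eta$ by the chain rule. Writing $v := \bar u - u$ one has $\nabla_k \eta = K\eta \nabla_k v$ and $\nabla_{ij}\eta = K\eta \nabla_{ij} v + K^2 \eta \nabla_i v \nabla_j v$; inserting these into $\mathcal{L} = w^{ij}(\nabla_{ij} - \nabla_{p_k}A_{ij}\nabla_k) - \nabla_{p_k}\widetilde{B}\nabla_k$ and regrouping yields the clean identity
\[
\mathcal{L}\eta = K\eta\,\mathcal{L} v + K^2 \eta\, w^{ij}\nabla_i v\,\nabla_j v,
\]
whose last term is non-negative because $(w^{ij})$ is positive definite. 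The whole task therefore reduces to a suitable lower bound for $\mathcal{L}v$.

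To expand $Lv$ I would use the trace identity $w^{ij}\nabla_{ij}u = n + w^{ij}A_{ij}(x,u,\nabla u)$ (which follows from $w^{ij}w_{ij}=n$) and add and subtract $A_{ij}(x,\bar u, \nabla\bar u)$. The outcome is
\[
\mathcal{L}v = w^{ij}\bar w_{ij} - n + w^{ij}E_{ij} - \nabla_{p_k}\widetilde{B}\,\nabla_k v,
\]
where $\bar w_{ij} := \nabla_{ij}\bar u - A_{ij}(x,\bar u,\nabla\bar u)$ is the augmented Hessian of $\bar u$ and
\[
E_{ij} := A_{ij}(x,\bar u,\nabla\bar u) - A_{ij}(x,u,\nabla u) - \nabla_{p_k}A_{ij}(x,u,\nabla u)(\nabla_k\bar u - \nabla_k u).
\]
Because $\bar u$ is a strict elliptic supersolution, $\bar w_{ij}$ is uniformly positive definite on $B_r(x_0)$, i.e.\ $\bar w_{ij}\ge\theta g_{ij}$ for some $\theta>0$ depending on $\bar u$, giving $w^{ij}\bar w_{ij} \ge \theta \sum_i w^{ii}$. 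The constants $-n$ and the drift term $\nabla_{p_k}\widetilde{B}\,\nabla_k v$ are bounded purely in terms of the listed data and $|u|_{1;\Omega}$.

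The crucial—and most delicate—step is a matching lower bound for $w^{ij}E_{ij}$. I would expand $E_{ij}$ by Taylor's theorem with integral remainder along the segment $(x, u_s, \nabla u_s)$, $u_s = u + s(\bar u - u)$, which extracts a leading quadratic term $\int_0^1 (1-s)\nabla_{p_kp_l}A_{ij}(\cdot)(\nabla_k v)(\nabla_l v)\,ds$ together with remainders controlled by $|\bar u - u|$ and $|\nabla v|$. To handle this quadratic term, diagonalise $(w^{ij}) = \sum_a \lambda_a \phi^a_i \phi^a_j$ in an orthonormal basis with $\lambda_a > 0$, and in each eigen-direction decompose $\nabla v = c_a \phi^a + \xi^\perp_a$, with $\xi^\perp_a\perp\phi^a$. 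The regular condition \eqref{regular} forces the purely perpendicular contribution
\[
\nabla_{p_kp_l}A_{ij}\phi^a_i\phi^a_j (\xi^\perp_a)_k (\xi^\perp_a)_l \ge 0,
\]
while the mixed and parallel pieces are uniformly bounded by $C|\nabla v|^2$ (using $|c_a|\le|\nabla v|$). Summing over $a$ gives $w^{ij}E_{ij}\ge -C_0 \sum_i w^{ii} - C$, where $C_0$ depends on $|A|_{C^2}$ and the $C^1$ bounds of $u$ and $\bar u$.

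Putting everything together, $\mathcal{L}v \ge (\theta - C_0)\sum_i w^{ii} - C$, so $\mathcal{L}\eta \ge K\eta(\theta - C_0)\sum_i w^{ii} - CK$. The main obstacle is to secure the margin $\theta - C_0 > 0$; this is exactly the point where the \emph{strictness} of $\bar u$ as a supersolution enters, possibly via a local modification of $\bar u$ on $B_r(x_0)$ that enlarges $\bar w$ without destroying the supersolution inequality. Once $\theta > C_0$ is in hand, a final choice of $K$ absorbs the bounded errors and yields
\[
\mathcal{L}(e^{K(\bar u - u)}) \ge \varepsilon \sum_i w^{ii} - C
\]
for some $\varepsilon > 0$, as claimed.
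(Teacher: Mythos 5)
Your opening identity $\mathcal{L}e^{Kv}=Ke^{Kv}\mathcal{L}v+K^{2}e^{Kv}w^{ij}\nabla_iv\nabla_jv$ (with $v=\bar u-u$) is correct, but the step where you discard the $K^{2}$-term as ``merely nonnegative'' and reduce everything to a bound $\mathcal{L}v\ge(\theta-C_0)\sum_iw^{ii}-C$ is where the argument genuinely breaks. Under the regularity condition \eqref{regular} alone, the part of your quadratic remainder in the direction of $\nabla v$ (the ``parallel piece'') contracts with $w^{ij}$ to a term of size $-C_0\sum_iw^{ii}$, where $C_0$ is fixed by $|A|_{C^2}$ and the $C^{1}$ bounds of $u,\bar u$; there is no reason for it to be smaller than the ellipticity margin $\theta$ of $\bar w$, so the margin $\theta-C_0>0$ cannot be secured. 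Your suggested remedy --- modifying $\bar u$ locally to enlarge $\bar w$ --- goes in the wrong direction: enlarging $\bar w$ increases $\det(\bar w g^{-1})$ and destroys the supersolution inequality \eqref{super equation}; strictness only permits a small perturbation, and the paper uses it the opposite way, replacing $\bar u$ by $\bar u-\varepsilon d^{2}$, which yields only the small gain $\varepsilon\sum_iw^{ii}$. A second, related slip: by Taylor-expanding $E_{ij}$ jointly in $(z,p)$ along the segment you also create first-order $z$-terms $w^{ij}\nabla_zA_{ij}(\bar u-u)$ and mixed $z$--$p$ remainders, which again are of order $\sum_iw^{ii}$ with non-small constants and are not sign-controlled; the paper avoids this by splitting the increment into the $z$-difference, which is nonnegative after contraction with $w^{ij}$ thanks to the monotonicity \eqref{monotone A} and $\bar u\ge u$, and the $p$-difference, expanded by Taylor in $p$ only at fixed $(x,u)$.

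The missing idea is that the $K^{2}$-term you threw away is precisely what controls the parallel direction. Keep $\tfrac12 w^{ij}\nabla_{p_kp_l}A_{ij}\nabla_kv\nabla_lv+Kw^{ij}\nabla_iv\nabla_jv$ together, choose the frame so that $e_1=\nabla v/|\nabla v|$ (if $\nabla v=0$ there is nothing to prove), so this expression equals $\bigl(\tfrac12 w^{ij}\nabla_{p_1p_1}A_{ij}+Kw^{11}\bigr)|\nabla v|^{2}$. Regularity \eqref{regular} makes the block $i,j\neq 1$ nonnegative; the remaining terms carry a factor $w^{11}$ or $w^{1j}$, and by Cauchy--Schwarz $|w^{1j}|\le\sqrt{w^{11}w^{jj}}\le\epsilon_0w^{jj}+\tfrac{1}{4\epsilon_0}w^{11}$ they split into a small multiple of $\sum_iw^{ii}$ --- absorbed by the $\varepsilon\sum_iw^{ii}$ coming from the strictness/$d^{2}$-perturbation (or from your $w^{ij}\bar w_{ij}\ge\theta\sum_iw^{ii}$) --- plus a multiple of $w^{11}$, which is beaten by taking $K$ large so that $Kw^{11}|\nabla v|^{2}$ dominates. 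With these two corrections (splitting the $z$- and $p$-increments, and using the $K$-term for the parallel direction) your outline becomes essentially the paper's proof; without them the inequality you aim for is not available under \eqref{regular}.
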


\begin{proof}
Since $\bar{u}$ is a strict elliptic supersolution, then there exists $\varepsilon>0$ such that $\bar{u}_\varepsilon=\bar{u}-\varepsilon d^2$ is still a supersolution of \eqref{1.1}, i.e.
\begin{equation}
F(\bar{u}_\varepsilon)\leq \tilde{B}(x,\bar{u}_\varepsilon,\nabla \bar{u}_\varepsilon).
\end{equation}
Let $v_\varepsilon=\bar{u}_\varepsilon-u$, then we have at $x_0$
\begin{equation}\label{gx2-6}
\aligned
L(\bar{u}-u)=Lv_\varepsilon+\varepsilon Ld^2
\geq Lv_\varepsilon+\varepsilon\sum_iw^{ii}.
\endaligned
\end{equation}
By the definition of $L$, we have
\begin{equation}\label{gx2-2}
\aligned
Lv_\varepsilon=w^{ij}(\nabla_{ij}v_\varepsilon-\nabla_{p_k}A_{ij}(x,u,\nabla u)\nabla_kv_\varepsilon).
\endaligned
\end{equation}
By the concavity of $F$, we have
\begin{equation}\label{gx2-1}
F(\bar{u}_\varepsilon)-F(u)\leq w^{ij}[\nabla_{ij}v_\varepsilon-A_{ij}(x,\bar{u}_\varepsilon,\nabla \bar{u}_\varepsilon)+A_{ij}(x,u,\nabla u)],
\end{equation}
Combining \eqref{gx2-2} and \eqref{gx2-1}, we get
\begin{equation}\label{gx2-3}
\aligned
Lv_\varepsilon\geq& F(\bar{u}_\varepsilon)-F(u)+w^{ij}[A_{ij}(x,\bar{u}_\varepsilon,\nabla \bar{u}_\varepsilon)-A_{ij}(x,u,\nabla u)\\
&-\nabla_{p_k}A_{ij}(x,u,\nabla u)\nabla_kv_\varepsilon]\\
=& F(\bar{u}_\varepsilon)-F(u)+w^{ij}[A_{ij}(x,\bar{u}_\varepsilon,\nabla \bar{u}_\varepsilon)-A_{ij}(x,u,\nabla \bar{u}_\varepsilon)\\
&+A_{ij}(x,u,\nabla \bar{u}_\varepsilon)-A_{ij}(x,u,\nabla u)-\nabla_{p_k}A_{ij}(x,u,\nabla u)\nabla_kv_\varepsilon].
\endaligned
\end{equation}
From \eqref{monotone A},
\begin{equation}\label{gx2-4}
\aligned
&w^{ij}[A_{ij}(x,\bar{u}_\varepsilon,\nabla \bar{u}_\varepsilon)-A_{ij}(x,u,\nabla \bar{u}_\varepsilon)]\\
=&w^{ij}\nabla_zA_{ij}(x,\hat{z},\nabla \bar{u}_\varepsilon)v_\varepsilon\geq0,
\endaligned
\end{equation}
where $u\leq\hat{z}\leq \bar{u}_\varepsilon$.
By the Taylor expansion, we have
\begin{equation}\label{gx2-5}
\aligned
&w^{ij}[A_{ij}(x,u,\nabla \bar{u}_\varepsilon)-A_{ij}(x,u,\nabla u)-\nabla_{p_k}A_{ij}(x,u,\nabla u)\nabla_kv_\varepsilon]\\
=&\frac{1}{2}w^{ij}\nabla_{p_kp_l}A_{ij}(x,u,p_\theta)\nabla_kv_\varepsilon\nabla_lv_\varepsilon,
\endaligned
\end{equation}
here $p_\theta=\theta\nabla \bar{u}_\varepsilon+(1-\theta)\nabla u$ with $0\leq\theta\leq 1$. Let $v=\bar{u}-u$.  Combining \eqref{gx2-6}-\eqref{gx2-5}, we have
\begin{equation}\label{gx2-7}
\aligned
Lv\geq\varepsilon\sum_iw^{ii}+\frac{1}{2}w^{ij}\nabla_{p_kp_l}A_{ij}(x,u,p_\theta)\nabla_kv\nabla_lv-C_1
\endaligned
\end{equation}
at $x_0$, where $C_1$ is positive constance depend on $B$, $|u|_{C^1}$ and $|\bar{u}|_{C^2}$. By a direct calculation, we have
\begin{equation}\label{gx2-8}
\aligned
Le^{Kv}=&Ke^{Kv}[Lv+Kw^{ij}\nabla_iv\nabla_jv]\\
\geq&Ke^{Kv}[\varepsilon\sum_iw^{ii}+\frac{1}{2}w^{ij}\nabla_{p_kp_l}A_{ij}(x,u,p_\theta)\nabla_kv\nabla_lv+Kw^{ij}\nabla_iv\nabla_jv-C_1].
\endaligned
\end{equation}
We assume $e_1=\frac{\nabla v}{|\nabla v|}$ when $\nabla v\neq0$ at $x_0$, or else we finish the proof. Since $A$ is regular by \eqref{regular}, it follows
\begin{equation}\label{gx2-9}
\aligned
&\frac{1}{2}w^{ij}\nabla_{p_kp_l}A_{ij}\nabla_kv\nabla_lv+Kw^{ij}\nabla_iv\nabla_jv\\
=&(\frac{1}{2}w^{ij}\nabla_{p_1p_1}A_{ij}+Kw^{11})|\nabla v|^2\\
\geq&\Big(\frac{1}{2}w^{11}\nabla_{p_1p_1}A_{11}+\sum_{i\neq1}w^{1j}\nabla_{p_1p_1}A_{1j}+Kw^{11}\Big)|\nabla v|^2.
\endaligned
\end{equation}
Since
\begin{equation}
|w^{1j}|\leq w^{11}w^{jj},
\end{equation}
by the cauchy inequality, we have
\begin{equation}
|w^{1j}|\leq \epsilon_0 w^{ii}+\frac{1}{\epsilon_0}w^{11},
\end{equation}
for positive constant $\epsilon_0$.
Hence,
\begin{equation}\label{gx927}
\begin{array}{ll}
Le^{Kv}
&\geq Ke^{Kv}[\varepsilon\sum_iw^{ii}-\frac{1}{2}\epsilon_0w^{ii}|\nabla_{p_1p_1}A_{1i}|
|\nabla_1 v|^2\\
&-\frac{1}{8\epsilon_0}w^{11}|\nabla_{p_1p_1}A_{1i}||\nabla v|^2+Kw^{11}|\nabla_1 v|^2-C_1].
\end{array}
\end{equation}
Furthermore choosing $\epsilon_0\leq \frac{\epsilon}{|\nabla_{p_1p_1}A_{1i}||\nabla_1 v|^2}$ and $K\geq \frac{|\nabla_{p_1p_1}A_{1i}|}{8\epsilon_0}$, we have
\begin{equation}
\begin{array}{ll}
\mathcal{L}e^{Kv}&=Le^{Kv}-\nabla_{p_k}\widetilde{B}\nabla_k e^{Kv}\\
&\geq Ke^{Kv}{\frac{\epsilon}{2}\sum_iw^{ii}-C_2}\\
&\geq \epsilon_1\sum_iw^{ii}-C_3,
\end{array}
\end{equation}
where $\epsilon_1=\frac{\epsilon}{2}Ke^{Kv}$.
\end{proof}

Define $\Omega_{\mu}=\{x\in \Omega {\big{|}} \ r(x):={\rm dist}(x,\partial\Omega)< \mu\}$, where $\mu$ is a positive constant. 
Here we also assume $\mu$ is small enough such that $d(x)$ is smooth  in $\Omega_{\mu}$. We assume that the unit inner
normal vector $\nu$ has been smoothly extended from $\partial\Omega$
 to $\overline{\Omega_{\mu}}$
, which can be simply achieved by taking
$\nu=\nabla r$ in
$\Omega_{\mu}$.

\begin{Lemma}\label{lem2}
Suppose that $u$ is an elliptic solution of \eqref{1.1}, and $\Omega$ is uniformly $A$-convex \eqref{domain A convexity}.
Then
\begin{equation}
\nabla^2_{\nu\nu}u\leq C(1+M_2)^{\frac{n-2}{n-1}}
\end{equation}
on $\partial\Omega$, where $M_2=\sup_{\Omega}|\nabla^2u|$, and $C$ is
a positive constance depending on $n$, $g$, $A$, $B$, $\Omega$, $\varphi$, $\delta_0$ and $|u|_{1;\Omega}$.
\end{Lemma}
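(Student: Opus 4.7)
The plan is to bound $\nabla^2_{\nu\nu}u$ at an arbitrary boundary point by expanding the Monge-Amp\`ere determinant along the normal direction, following the scheme of Lions-Trudinger-Urbas. Fix $x_0 \in \partial\Omega$ and choose a local orthonormal frame $\{e_1,\dots,e_n\}$ with $e_n = \nu(x_0)$; tangential indices are denoted by Greek letters $\alpha,\beta\in\{1,\dots,n-1\}$.

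The first step controls the mixed tangential--normal components $w_{n\alpha}(x_0)$. Differentiating the Neumann condition $\nabla_\nu u = \varphi(x,u)$ once in the direction $e_\alpha$ yields
\begin{equation*}
\nabla_{n\alpha}u + (\nabla_{e_\alpha}\nu)^k \nabla_k u = \nabla_\alpha \varphi + \varphi_z \nabla_\alpha u,
\end{equation*}
so $|\nabla_{n\alpha}u|$ is bounded by a constant depending on $|u|_{1;\Omega}$, $|\varphi|_{C^1}$, and the geometry of $\partial\Omega$. Hence $|w_{n\alpha}|\le C$ on $\partial\Omega$, while the tangential components satisfy the crude bound $|w_{\alpha\beta}|\le C(1+M_2)$.

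The second step extracts information from the equation $\det w = B$. Writing $w$ in block form with tangential block $\widetilde w = (w_{\alpha\beta})_{\alpha,\beta<n}$, column vector $\xi=(w_{n\alpha})_\alpha$, and scalar $w_{nn}$, the block determinant identity gives
\begin{equation*}
B = \det w = w_{nn}\det\widetilde w - \xi^{T}(\mathrm{adj}\,\widetilde w)\,\xi.
\end{equation*}
Since the eigenvalues of $\widetilde w$ are nonnegative and bounded by $C(1+M_2)$, the cofactor matrix satisfies $|\mathrm{adj}\,\widetilde w|\le C(1+M_2)^{n-2}$. Combined with the bound on $|\xi|$ from step one, this produces the key inequality $w_{nn}\det\widetilde w \le C(1+M_2)^{n-2}$. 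If we had only a uniform positive lower bound $\det\widetilde w \ge c$, this would yield the crude estimate $w_{nn}\le C(1+M_2)^{n-2}$. The sharper exponent $(n-2)/(n-1)$ reflects instead the AM-GM balance between the $n-1$ tangential eigenvalues: any such eigenvalue $\lambda_\alpha$ that is very small forces the corresponding cofactor to be large, tightening the inequality.

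The third and most delicate step is the lower bound on $\det\widetilde w$ supplied by the uniform $A$-convexity \eqref{domain A convexity}. Testing the tensor $\nabla_{ij}\phi - \nabla_{p_k}A_{ij}\nabla_k\phi \ge \delta_0 I$ against tangential vectors, and using $\phi = 0$ with $\nabla\phi\parallel \nu$ on $\partial\Omega$, produces a positive lower bound on a linear combination of tangential $w_{\alpha\beta}$ components, from which one deduces that the eigenvalues of $\widetilde w$ cannot all be small. More precisely, if $\lambda_1\le\cdots\le\lambda_{n-1}$ are the eigenvalues of $\widetilde w$, then $\sum_\alpha \lambda_\alpha \ge c_1 > 0$, so at least one eigenvalue is bounded below independently of $M_2$, while the others are at most $C(1+M_2)$; combining with $w_{nn}\det\widetilde w \le C(1+M_2)^{n-2}$ and optimizing via the arithmetic-geometric mean relation on the remaining $n-2$ tangential eigenvalues yields $w_{nn}\le C(1+M_2)^{(n-2)/(n-1)}$. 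The main obstacle is exactly this step: converting the $A$-convexity condition into a quantitative lower bound on a suitable symmetric function of the $\lambda_\alpha$ that is sharp enough to produce the fractional exponent, rather than the naive bound $(1+M_2)^{n-2}$ that comes from merely knowing $\det\widetilde w$ is bounded below.
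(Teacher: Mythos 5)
Your Step 1 (tangential differentiation of the Neumann condition to bound the mixed components $w_{n\alpha}$) and the block-determinant identity in Step 2 are both fine, but the argument breaks down exactly where you flag the difficulty, in Step 3, and the gap is not reparable along these lines. The uniform $A$-convexity \eqref{domain A convexity} is a condition on the defining function $\phi$ and the matrix $A(x,u,\nabla u)$; testing it against tangential vectors gives information about $\nabla_{\alpha\beta}\phi-\nabla_{p_k}A_{\alpha\beta}\nabla_k\phi$, not about $w_{\alpha\beta}=\nabla_{\alpha\beta}u-A_{\alpha\beta}$, so it yields no pointwise lower bound on $\sum_\alpha\lambda_\alpha(\widetilde w)$, let alone on $\det\widetilde w$. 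Moreover, even if one grants $\sum_\alpha\lambda_\alpha\geq c_1>0$, the conclusion does not follow: with one tangential eigenvalue bounded below and the remaining $n-2$ eigenvalues arbitrarily small, $\det\widetilde w$ can be arbitrarily small, and the inequality $w_{nn}\det\widetilde w\leq C(1+M_2)^{n-2}$ then places no constraint on $w_{nn}$ (already for $n=3$: eigenvalues $(c_2,\epsilon)$ with $\epsilon\to 0$ and $w_{nn}\sim B/(c_2\epsilon)$ is consistent with everything you have written, yet violates the claimed bound). No purely pointwise linear-algebra argument at a single boundary point can close this, because the equation $\det w=B$ together with bounded mixed entries and $|\widetilde w|\leq C(1+M_2)$ simply does not force $w_{nn}$ to be small when the tangential block degenerates.

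The paper's proof is of a genuinely different, PDE nature: it is a barrier argument in a boundary neighbourhood for $h=\nabla_\nu u-\varphi(x,u)$, which vanishes identically on $\partial\Omega$. Differentiating the equation along $\nu$ and using the Ricci identity gives $Lh\leq C(1+\sum_i w^{ii}+|\nabla^2u|)$; the exponent $\tfrac{n-2}{n-1}$ then comes from the algebraic consequences of $\det w=B>0$, namely $1\leq C\sum_i w^{ii}$ and $(w_{ii})^{1/(n-1)}\leq C\sum_i w^{ii}$, which allow one to absorb $|\nabla^2u|\leq M_2=M_2^{(n-2)/(n-1)}M_2^{1/(n-1)}$ and conclude $Lh\leq C(1+M_2^{(n-2)/(n-1)})\sum_i w^{ii}$. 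The uniform $A$-convexity enters only through the linearized operator, via $L\phi\geq\delta_0\sum_i w^{ii}$, so that a suitable multiple of $-\phi$ serves as a barrier; the maximum principle then bounds $\nabla_\nu h$ on $\partial\Omega$, and hence $\nabla_{\nu\nu}u$, by $C(1+M_2)^{(n-2)/(n-1)}$. If you want to salvage your outline, you must replace Step 3 by this neighbourhood barrier argument (or an equivalent maximum-principle argument); the block-determinant expansion by itself cannot produce the estimate.
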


\begin{proof}
Fixing $x_0\in\partial\Omega$, let $\{e_1,e_2,\cdots,e_{n} \}$ be a local orthonormal frame near $x_0$.
By a direct calculation, one has
\begin{equation}\label{gx2-11}
\aligned
L(\nabla_\nu u)=&w^{ij}[\nabla_{ijk}u\nu_k+2\nabla_{ki}u\nabla_j\nu_k+\nabla_{k}u\nabla_{ij}\nu_k\\
& -\nabla_{p_l}A_{ij}\nabla_{kl}u\nu_k-\nabla_{p_l}A_{ij}\nabla_{k}u\nabla_l\nu_k].
\endaligned
\end{equation}
Differentiating equation \eqref{1.1} along $\nu$, we get
\begin{equation}\label{gx2-10}
\aligned
w^{ij}\nabla_\nu w_{ij}=\nabla_\nu \tilde{B}+\nabla_z \tilde{B}\nabla_\nu u+\nabla_{p_k} \tilde{B}\nabla_{ik}u\nu_i.
\endaligned
\end{equation}
By the Ricci identity \eqref{ric}, it follows
\begin{equation}\label{gx2-12}
\aligned
\nabla_\nu w_{ij}=&\nabla_{kij}u\nu_k-\nabla_\nu A_{ij}-\nabla_z A_{ij}\nabla_\nu u-\nabla_{p_k} A_{ij}\nabla_{ik}u\nu_i\\
=&\nabla_{ijk}u\nu_k+R_{lijk}\nabla_lu\nu_k-\nabla_\nu A_{ij}-\nabla_z A_{ij}\nabla_\nu u-\nabla_{p_k} A_{ij}\nabla_{ik}u\nu_i.
\endaligned
\end{equation}
Since $w_{ij}=\nabla_{ij}u-A_{ij}$, we can obtain
\begin{equation}\label{gx2-13}
\aligned
w^{ij}\nabla_{ki}u=w^{ij}(w_{ki}+A_{ki})=\delta_{jk}+w^{ij}A_{ki}.
\endaligned
\end{equation}
Putting \eqref{gx2-10}, \eqref{gx2-12} and \eqref{gx2-13} into \eqref{gx2-11}, we have
\begin{equation}\label{gx2-14}
\aligned
L(\nabla_\nu u)\leq C(1+\sum_i w^{ii}+|\nabla^2u|),
\endaligned
\end{equation}
here $C$ depend on $n$, $g$, $A$, $B$, $\Omega$, and $|u|_{1;\Omega}$.
Consider $h=\nabla_\nu u-\varphi(x,u)$. From a similar computation, we get
\begin{equation}\label{gx2-15}
\aligned
Lh\leq C(1+\sum_i w^{ii}+|\nabla^2u|).
\endaligned
\end{equation}
From the positivity of $B$, we have
\begin{equation}
1\leq Cw^{ii}, \ \ \ \ (w_{ii})^{\frac{1}{n-1}}\leq C(w^{ii}).
\end{equation}
So
\begin{equation}
\aligned
Lh\leq C(1+M_2^{\frac{n-2}{n-1}})\sum_i w^{ii}.
\endaligned
\end{equation}
Since the domain $\Omega$ is $A$-convexity \eqref{domain A convexity},
\begin{equation}
\aligned
L\phi\geq \delta_0\sum_i w^{ii}.
\endaligned
\end{equation}
Choosing $-\phi$ as a barrier function, a standard barrier argument leads to
\begin{equation}
\aligned
\nabla_\nu h\leq C(1+M_2^{\frac{n-2}{n-1}}),
\endaligned
\end{equation}
which completes the proof of the lemma.
\end{proof}

Applying the tangential operator to the boundary condition $(\ref{1.2})$, we have the mixed tangential normal derivative estimate. Then from Lemma \ref{lem2}, the double normal derivative estimate has been bounded. Next, we shall adopt the method in \cite{LTU1986} to obtain the double tangential derivative bound on the boundary.  Consequently we achieve the second derivative estimate on the boundary.

Modifying the elliptic supersolution $\bar{u}$ by adding a perturbation function $-a\phi$, where $a$ is a small positive constant. Note that if $a$ is small enough then the function $\bar{u}-a\phi$ is still elliptic supersolution of (\ref{1.1}) and (\ref{1.2}). On $\partial\Omega$, we have \begin{equation}\label{9-28}\nabla_{\nu}(\bar{u}-a\phi-u)\geq a,\end{equation} by the condition (\ref{monotone varphi}).

We now consider an auxiliary function $V(x,\xi)$ given by
\begin{equation}
V(x,\xi)=e^{\alpha|\nabla (u-\lambda\phi)|^2+\beta\Phi}(w_{\xi\xi}-V'(x,\xi))
\end{equation}
for $x\in\bar{\Omega}$, $\xi\in T_xM$ with $|\xi|=1$, $$\lambda=\max_{\overline{\Omega}}|\nabla u|,$$ and
\begin{equation}
V'(x,\xi)=2g(\xi,\nu)[\nabla_{\xi'}\varphi(x,u)-g(\nabla u,\xi')-A_{\nu\xi'}],
\end{equation}
where $\xi'=\xi-g(\xi,\nu)\nu$, $\nu$ denote the extension of the inner normal vector field on $M$ and $\Phi=e^{K(\bar{u}-u-a\phi)}$.
We assume that $V$ attain its maximum at $(x_0,\xi)$.

\vskip10pt
\textbf{Case 1.}
$x_0$ is an interior point.
$\xi$ still denotes the extension of $\xi$ in a small neighborhood of $x_0$ with $\nabla\xi(x_0)=0$, and
let $\{e_1,e_2,\cdots,e_{n} \}$ be a local orthonormal frame in the neighborhood with $w_{ij}$ diagonal at $x_0$ and $w_{11}$ is the largest eigenvalue.
Set $H=\ln V$, then we have at $x_0$,
\begin{eqnarray}
\label{9-10-1}0&=&\nabla_i H=\frac{\nabla_i(w_{\xi\xi}-V')}{w_{\xi\xi}-V'} + 2\alpha \nabla_k(u-\lambda\phi) \nabla_{ik}(u-\lambda\phi) + \beta \nabla_i \Phi, \quad {\rm for}\  i=1\cdots n,\\
\label{9-9-1}0&\geq&\mathcal{L} H=\mathcal{L}\ln(w_{\xi\xi}-V')+2\alpha\mathcal{L}|\nabla (u-\lambda\phi)|^2+\beta\mathcal{L}\Phi.
\end{eqnarray}
By a direct calculation, we have
\begin{equation}\label{09}
\aligned
\mathcal{L}\ln(w_{\xi\xi}-V')=&\frac{\mathcal{L}(w_{\xi\xi}-V')}{w_{\xi\xi}-V'}-\frac{w^{ij}\nabla_i(w_{\xi\xi}-V')\nabla_j(w_{\xi\xi}-V')}{(w_{\xi\xi}-V')^2}\\
\geq&\frac{\mathcal{L}w_{\xi\xi}-\mathcal{L}V'}{w_{\xi\xi}-V'}-(1+\theta)\frac{w^{ij}\nabla_iw_{\xi\xi}\nabla_jw_{\xi\xi}}{(w_{\xi\xi}-V')^2}\\
&-C(\theta)\frac{w^{ij}\nabla_iV'\nabla_jV'}{(w_{\xi\xi}-V')^2}.
\endaligned
\end{equation}
From the definition of $\mathcal{L}$, we have
\begin{equation}\label{9-26-4}
\mathcal{L}w_{\xi\xi}=w^{ij}[\nabla_{ij}w_{\xi\xi}-\nabla_{p_k}A_{ij}\nabla_kw_{\xi\xi}]-\nabla_{p_k}\tilde{B}\nabla_kw_{\xi\xi}.
\end{equation}
Taking  derivative on both sides of the equation \eqref{9-8-1} in the direction of $\xi$, we get
\begin{equation}\label{9-9-2}
w^{ij}\nabla_\xi w_{ij}=\nabla_\xi\tilde{B}+\nabla_z\tilde{B}\nabla_\xi u+\nabla_{p_k}\tilde{B}\nabla_{jk}u\xi_j.
\end{equation}
A further differentiation in the direction of $\xi$ yields
\begin{equation}
\aligned
w^{ij}\nabla_{\xi\xi}w_{ij}=&w^{ik}w^{jl}\nabla_{\xi}w_{ij}\nabla_{\xi}w_{kl}+\nabla_{p_k}\tilde{B}\nabla_{ijk}u\xi_i\xi_j+\nabla_z\tilde{B}\nabla_{ij} u\xi_i\xi_j\\
&+\nabla_{\xi\xi}\tilde{B}+2\nabla_{\xi z}\tilde{B}\nabla_\xi u+2\nabla_{\xi p_k}\tilde{B}\nabla_{jk}u\xi_j+\nabla_{zz}\tilde{B}(\nabla_\xi u)^2\\
&+2\nabla_{zp_k}\tilde{B}\nabla_\xi u\nabla_{jk}u\xi_j+\nabla_{p_kp_l}\tilde{B}\nabla_{jk}u\nabla_{il}u\xi_i\xi_j
\endaligned
\end{equation}
at $x_0$.
Then
\begin{equation}\label{9-8-5}
\aligned
w^{ij}\nabla_{\xi\xi}w_{ij}\geq&w^{ik}w^{jl}\nabla_{\xi}w_{ij}\nabla_{\xi}w_{kl}+\nabla_{p_k}\tilde{B}\nabla_{ijk}u\xi_i\xi_j-C[1+(w_{ii})^2].
\endaligned
\end{equation}
For convenient, we define a (0,3)-tensor as follows,
\begin{equation}\label{def-C}
\aligned
T_{ijk}=&\nabla_k A_{ij}+\nabla_z A_{ij}\nabla_k u+\nabla_{p_l} A_{ij}\nabla_{kl}u\\
&-\nabla_j A_{ik}-\nabla_z A_{ik}\nabla_j u-\nabla_{p_l} A_{ik}\nabla_{jl}u.
\endaligned
\end{equation}
Besides, by the Ricci identity, we have
\begin{equation}\label{9-8-2}
\nabla_k w_{ij}-\nabla_j w_{ik}=\nabla_suR_{sijk}-T_{ijk}.
\end{equation}
By a direct computation, it follows
\begin{equation}\label{9-8-4}
\aligned
&w^{ij}[\nabla_{ij}w_{\xi\xi}-\nabla_{\xi\xi}w_{ij}]\\
=&w^{ij}[\nabla_{ij}w_{kl}-\nabla_{kl}w_{ij}]\xi_k\xi_l+2w^{ij}w_{kl}\nabla_{ij}\xi_k\xi_l,
\endaligned
\end{equation}
and from the Ricci identity and \eqref{9-8-2},
\begin{equation}\label{9-8-3}
\aligned
\nabla_{ij}w_{kl}=&\nabla_{jl}w_{ki}-\nabla_jT_{kli}+\nabla_{js}uR_{skli}+\nabla_su\nabla_jR_{skli}\\
=&\nabla_{lj}w_{ik}+w_{si}R_{sklj}+w_{sk}R_{silj}-\nabla_jT_{kli}\\
&+\nabla_{js}uR_{skli}+\nabla_su\nabla_jR_{skli}\\
=&\nabla_{lk}w_{ij}-\nabla_lT_{ikj}-\nabla_jT_{kli}+\nabla_{ls}uR_{sikj}\\
&+\nabla_su\nabla_lR_{sikj}+\nabla_{js}uR_{skli}+\nabla_su\nabla_jR_{skli}\\
&+w_{si}R_{sklj}+w_{sk}R_{silj}.
\endaligned
\end{equation}
Combining \eqref{9-8-5}, \eqref{9-8-4} and \eqref{9-8-3}, we have
\begin{equation}\label{05}
\aligned
&w^{ij}\nabla_{ij}w_{\xi\xi}-\nabla_{p_k}\tilde{B}\nabla_{k}w_{\xi\xi}\\
\geq& w^{ik}w^{jl}\nabla_{\xi}w_{ij}\nabla_{\xi}w_{kl}-w^{ij}(\nabla_lT_{ikj}+\nabla_jT_{kli})\xi_k\xi_l-C[1+\mathcal{T}w_{ii}+(w_{ii})^2].
\endaligned
\end{equation}
where $\mathcal{T}=w^{ii}$.
From the definition of the tensor $T$ given by \eqref{def-C}, it follows
\begin{equation}\label{01}
\aligned
&w^{ij}(\nabla_lT_{ikj}+\nabla_jT_{kli})\xi_k\xi_l\\
=&w^{ij}(\nabla_{p_s}A_{kl}\nabla_{jis}u-\nabla_{p_s}A_{ij}\nabla_{lks}u)\xi_k\xi_l+w^{ij}(A_{sk}R_{sijl}+A_{si}R_{skjl})\xi_k\xi_l\\
&+w^{ij}\{(\nabla_zA_{kl}\nabla_{ij}u+\nabla_{ij}A_{kl}+2\nabla_{iz}A_{kl}\nabla_ju+2\nabla_{ip_s}A_{kl}\nabla_{js}u\\
&+\nabla_{zz}A_{kl}\nabla_iu\nabla_ju+2\nabla_{p_sz}A_{kl}\nabla_{sj}u\nabla_iu+\nabla_{p_sp_t}A_{kl}\nabla_{is}u\nabla_{jt}u)\\
&-(\nabla_zA_{ij}\nabla_{kl}u+\nabla_{kl}A_{ij}+2\nabla_{kz}A_{ij}\nabla_lu+2\nabla_{kp_s}A_{ij}\nabla_{ls}u\\
&+\nabla_{zz}A_{ij}\nabla_ku\nabla_lu+2\nabla_{p_sz}A_{ij}\nabla_{sl}u\nabla_ku+\nabla_{p_sp_t}A_{ij}\nabla_{ks}u\nabla_{lt}u)\}\xi_k\xi_l.
\endaligned
\end{equation}
By a direct calculation, we have
\begin{equation}\label{02}
\aligned
w^{ij}\nabla_{p_s}A_{ij}\nabla_{lks}u\xi_k\xi_l=w^{ij}\nabla_{p_s}A_{ij}(\nabla_sw_{\xi\xi}+\nabla_sA_{\xi\xi}+\nabla_muR_{mksl}),
\endaligned
\end{equation}
and
\begin{equation}\label{03}
\aligned
&w^{ij}\nabla_{jis}u=w^{ij}\nabla_{sij}u+w^{ij}R_{misj}\nabla_mu\\
=&w^{ij}\nabla_sw_{ij}+w^{ij}(\nabla_sA_{ij}+\nabla_zA_{ij}\nabla_su+\nabla_{p_m}A_{ij}\nabla_{ms}u+R_{misj}\nabla_mu)\\
=&\nabla_s\tilde{B}+\nabla_z\tilde{B}\nabla_s u+\nabla_{p_m}\tilde{B}\nabla_{sm}u+w^{ij}(\nabla_sA_{ij}+\nabla_zA_{ij}\nabla_su\\
&+\nabla_{p_m}A_{ij}\nabla_{ms}u+R_{misj}\nabla_mu).
\endaligned
\end{equation}
So from \eqref{01}, \eqref{02} and \eqref{03}, we get
\begin{equation}\label{04}
\aligned
&w^{ij}(\nabla_lT_{ikj}+\nabla_jT_{kli})\xi_k\xi_l\\
\leq&-w^{ij}\nabla_{p_s}A_{ij}\nabla_sw_{\xi\xi}+C(\mathcal{T}+\mathcal{T}w_{ii}+1).
\endaligned
\end{equation}
Then we have by \eqref{05} and \eqref{04}
\begin{equation}\label{07}
\mathcal{L}w_{\xi\xi}\geq w^{ik}w^{jl}\nabla_{\xi}w_{ij}\nabla_{\xi}w_{kl}-C[(1+w_{ii})\mathcal{T}+(w_{ii})^2].
\end{equation}
From a similar argument, we can also have
\begin{equation}
|\mathcal{L}V'|\leq C[(1+w_{ii})\mathcal{T}+(w_{ii})^2],
\end{equation}
and
\begin{equation}\label{08}
\aligned
\frac{1}{2}\mathcal{L}|\nabla (u-\lambda\phi)|^2=&w^{ij}[\nabla_{ijk}(u-\lambda\phi)\nabla_k(u-\lambda\phi)+\nabla_{ik}(u-\lambda\phi)\nabla_{jk}(u-\lambda\phi)\\
&-\nabla_{p_s}A_{ij}\nabla_{sk}(u-\lambda\phi)\nabla_k(u-\lambda\phi)]-\nabla_{p_s}\tilde{B}\nabla_{sk}(u-\lambda\phi)\nabla_s(u-\lambda\phi).
\endaligned
\end{equation}
By a direct calculation, it follows
\begin{equation}\label{9-26-1}
\aligned
&w^{ij}\nabla_{ik}(u-\lambda\phi)\nabla_{jk}(u-\lambda\phi)\\
=&w^{ij}(w_{ik}+A_{ik}-\lambda\nabla_{ik}\phi)(w_{jk}+A_{jk}-\lambda\nabla_{jk}\phi)\\
=&w_{ii}+2A_{ii}-2\lambda\triangle\phi+w^{ij}(A_{ik}-\lambda\nabla_{ik}\phi)(A_{jk}-\lambda\nabla_{jk}\phi),
\endaligned
\end{equation}
and
\begin{equation}\label{9-26-2}
\aligned
&w^{ij}\nabla_{ijk}(u-\lambda\phi)\nabla_k(u-\lambda\phi)\\
=&w^{ij}\nabla_{ijk}u\nabla_k(u-\lambda\phi)-\lambda w^{ij}\nabla_{ijk}\phi\nabla_k(u-\lambda\phi)\\
=&w^{ij}(\nabla_{k}w_{ij}+R_{sikj}\nabla_su+\nabla_{k}A_{ij}+\nabla_{z}A_{ij}\nabla_ku+\nabla_{p_s}A_{ij}\nabla_{ks}u)\nabla_k(u-\lambda\phi)\\
&-\lambda w^{ij}\nabla_{ijk}\phi\nabla_k(u-\lambda\phi).
\endaligned
\end{equation}
Putting \eqref{9-26-1}, \eqref{9-26-2} and \eqref{9-9-2} into \eqref{08}, we have
\begin{equation}\label{9-26-3}
\aligned
\frac{1}{2}\mathcal{L}|\nabla (u-\lambda\phi)|^2\geq w_{ii}-C\mathcal{T}.
\endaligned
\end{equation}

Combining \eqref{9-9-1}, \eqref{09}, \eqref{07} and \eqref{9-26-3}, we get from Lemma \ref{lem1},
\begin{equation}\label{9-10-3}
\aligned
0\geq&\frac{w^{ik}w^{jl}\nabla_{\xi}w_{ij}\nabla_{\xi}w_{kl}}{w_{\xi\xi}-V'}-(1+\theta)\frac{w^{ij}\nabla_iw_{\xi\xi}\nabla_jw_{\xi\xi}}{(w_{\xi\xi}-V')^2}\\
&-\frac{C[(1+w_{ii})\mathcal{T}+(w_{ii})^2]}{w_{\xi\xi}-V'}-C(\theta)\frac{w^{ij}\nabla_iV'\nabla_jV'}{(w_{\xi\xi}-V')^2}\\
&+2\alpha w_{ii}+(\beta-2\alpha C)\mathcal{T}-\beta C.
\endaligned
\end{equation}
Since $w_{11}$ is the largest eigenvalue,  then from the inequality in \cite{LTU1986}, we get at $x_0$,
\begin{equation}
w^{ik}w^{jl}\nabla_{\xi}w_{ij}\nabla_{\xi}w_{kl}\geq\frac{1}{w_{11}}w^{jl}\nabla_{\xi}w_{li}\nabla_{\xi}w_{jk}\xi_{k}\xi_{i}.
\end{equation}
From \eqref{9-8-2}, we have
\begin{equation}
\nabla_{\xi}w_{jk}\xi_{k}=\nabla_{j}w_{\xi\xi}+(\nabla_suR_{skji}-T_{kji})\xi_{k}\xi_i.
\end{equation}
Then
\begin{equation}
w^{ik}w^{jl}\nabla_{\xi}w_{ij}\nabla_{\xi}w_{kl}\geq\frac{1-\theta}{w_{11}}w^{jl}\nabla_{j}w_{\xi\xi}\nabla_{l}w_{\xi\xi}-\frac{C_\theta}{w_{11}}(1+w_{ii}).
\end{equation}
Since $V'$ is bounded, one can define a quantity as follows
 $$M_1=\sup\{V'(x_0,\eta)|\eta\in T_{x_0}M,\ |\eta|=1\}.$$
For $$w_{11}\geq w_{\xi\xi},\ \ \ {\rm and}\ \ \ \ w_{\xi\xi}-V'(x_0,\xi)\geq w_{11}-V'(x_0,e_1),$$ then if
\begin{equation}\label{9-10-5}
w_{11}>\frac{M_1}{\theta},
\end{equation}
we have
\begin{equation}
|w_{11}-w_{\xi\xi}+V'(x_0,\xi)|<\theta w_{11}.
\end{equation}
We assume \eqref{9-10-5} holds, or else we get the bound for $w$, and then
\begin{equation}\label{9-10-2}
\aligned
&\frac{w^{ik}w^{jl}\nabla_{\xi}w_{ij}\nabla_{\xi}w_{kl}}{w_{\xi\xi}-V'}-(1+\theta)\frac{w^{ij}\nabla_iw_{\xi\xi}\nabla_jw_{\xi\xi}}{(w_{\xi\xi}-V')^2}\\
\geq&-\frac{3\theta}{(1-\theta)^2}\frac{w^{ij}\nabla_iw_{\xi\xi}\nabla_jw_{\xi\xi}}{w^2_{11}}-\frac{C_\theta}{w^2_{11}}(1+w_{ii}).
\endaligned
\end{equation}
By the definition of $V'$, we have
\begin{equation}\label{9-10-4}
|\nabla V'|\leq C(1+w_{ii}).
\end{equation}
Putting \eqref{9-10-2} and \eqref{9-10-4} into \eqref{9-10-3}, we get from \eqref{9-10-1} the following
\begin{equation}
\aligned
0\geq(2\alpha-C-C\alpha^2\theta )w_{ii}+(\beta-2\alpha-C-C\beta^2\theta)\mathcal{T}-\beta C.
\endaligned
\end{equation}
So we obtain the estimate $w_{ii}\leq C$ by choosing $\alpha,\beta$ large and fixing a small $\theta$.

\vskip10pt

\textbf{Case 2.} $x_0\in \partial\Omega$.
In this case, we consider the following three subcases by different directions of $\xi$.

\vspace{2mm}

{\it Subcase (i).}
$\xi=\nu$, we proved in Lemma \ref{lem2} that
\begin{equation}
\nabla_{\nu\nu}u\leq C(1+M_2)^{\frac{n-2}{n-1}}.
\end{equation}

\vspace{2mm}

{\it Subcase (ii).} $\xi$ is neither normal nor tangential to $\partial\Omega$. The unit vector $\xi$ can be written as
\begin{equation}
\xi = \xi^T+ g(\xi\cdot\nu)\nu,
\end{equation}
here $\xi^T\in T_{x_0}\partial\Omega$ is the tangential part of $\xi$. Let $$\tau=\frac{\xi^T}{|\xi^T|}.$$
Then by the constructions of $V$ and $V'$, we have
\begin{equation}
\aligned
w_{\xi\xi}=|\xi^T|^2w(\tau,\tau)+g(\xi\cdot\nu)^2w(\nu,\nu)+V'(x_0,\xi).
\endaligned
\end{equation}
So
\begin{equation}
\aligned
V(x_0,\xi)=&|\xi^T|^2V(x_0,\tau)+g(\xi\cdot\nu)^2V(x_0,\nu)\\
\leq&|\xi^T|^2V(x_0,\xi)+g(\xi\cdot\nu)^2V(x_0,\nu),
\endaligned
\end{equation}
which implies $V(x_0,\xi)\leq V(x_0,\nu)$. In fact, $V(x_0,\xi)= V(x_0,\nu)$ for $V(x_0,\xi)\geq V(x_0,\nu)$.

\vspace{2mm}

{\it Subcase (iii).} $\xi$ is tangential to $\partial \Omega$ at $x_0$.
Let $\{e_1,e_2,\cdots,e_{n}\}$ be the local orthnormal frame near $x_0$ on $\Omega$ by parallel translation of a local orthnormal frame on $\partial\Omega$
with $e_n=\nu$.
We still use $\xi$ denote the extension of $\xi$ in a small neighborhood of $x_0$ with $\nabla\xi(x_0)=0$. Then
 $$\nabla_n V\leq0 \ \ \ \ \ {\rm at}\  x_0,$$ so by (\ref{9-28}) we have
\begin{equation}\label{9-26-7}
\aligned
0\geq&(\alpha\nabla_n|\nabla (u-\lambda\phi)|^2+\beta\nabla_n\Phi)w_{\xi\xi}-\nabla_n w_{\xi\xi}-\nabla_n V'(x,\xi)\\
\geq&[2\alpha\nabla_{nn}(u-\lambda\phi)\nabla_{n}(u-\lambda\phi)+2\alpha\sum_{i=1}^{n-1}\nabla_{in}(u-\lambda\phi)\nabla_{i}(u-\lambda\phi)+\beta a]w_{\xi\xi}\\
&-\nabla_n w_{\xi\xi}-\nabla_n V'(x,\xi).
\endaligned
\end{equation}
From the boundary condition \eqref{1.2}, it follows
\begin{equation}\label{9-26-5}
\aligned
\nabla_{in}u=&\nabla_{i}\nabla_{n}u-\nabla_{\nabla_ie_n}u\\
=&\nabla_{i}\varphi(x,u)\nabla_{i}u+\nabla_{z}\varphi(x,u)(\nabla_{i}u)^2-\nabla_{\nabla_ie_n}u\nabla_{i}u,
\endaligned
\end{equation}
when $1\leq i\leq n-1$.
Since $w$ is positive definite and $\lambda\geq|\nabla u|$, then
\begin{equation}\label{9-26-6}
\aligned
\nabla_{nn}(u-\lambda\phi)\nabla_{n}(u-\lambda\phi)&=(\nabla_{n}u+\lambda)(w_{nn}+A_{nn}-\lambda\nabla_{nn}\phi)\\
&\geq(\nabla_{n}u+\lambda)(A_{nn}-\lambda\nabla_{nn}\phi)\\
&\geq-C.
\endaligned
\end{equation}
Putting \eqref{9-26-5} and \eqref{9-26-6} into \eqref{9-26-7}, we can obtain
\begin{equation}\label{9-26-8}
\aligned
0\geq&(\beta a-\alpha C)w_{\xi\xi}-\nabla_n w_{\xi\xi}-\nabla_n V'(x,\xi).
\endaligned
\end{equation}
By a direct calculation, we have
\begin{equation}
\aligned
\nabla_\nu w_{\xi\xi}=&(\nabla_{ijk}u+R_{sijk}\nabla_su)\xi_i\xi_j\nu_k-\nabla_\nu A(\xi,\xi)\\
=&\nabla_{\xi\xi}(\nabla_\nu u)-2g(\nabla_\xi \nu,\nabla_\xi\nabla u)-g(\nabla_{\xi\xi}\nu,\nabla u)\\
&+R_{sijk}\nabla_su\xi_i\xi_j\nu_k-\nabla_\nu A(\xi,\xi).
\endaligned
\end{equation}
Since $\xi$ is tangential to $\partial \Omega$ at $x_0$,
\begin{equation}
\aligned
\nabla_{\xi\xi}(\nabla_\nu u)=\nabla_z\varphi\nabla_{\xi\xi}u+\nabla_{\xi\xi}\varphi+2\nabla_{\xi z}\varphi\nabla_\xi u+\nabla_{zz}\varphi(\nabla_\xi u)^2,
\endaligned
\end{equation}
So, we have
\begin{equation}\label{9-26-10}
\aligned
\nabla_\nu w_{\xi\xi}=&\nabla_z\varphi\nabla_{\xi\xi}u+\nabla_{\xi\xi}\varphi+\nabla_{zz}\varphi(\nabla_\xi u)^2+R_{sijk}\nabla_su\xi_i\xi_j\nu_k\\
&+2\nabla_{\xi z}\varphi\nabla_\xi u-2g(\nabla_\xi \nu,\nabla_\xi\nabla u)-g(\nabla_{\xi\xi}\nu,\nabla u)-\nabla_\nu A(\xi,\xi)\\
\geq&- C[1+w_{\xi\xi}],
\endaligned
\end{equation}
and
\begin{equation}\label{9-26-9}
|\nabla_\nu V'|\leq C[1+w_{\xi\xi}].
\end{equation}
From \eqref{9-26-8}, \eqref{9-26-10} and \eqref{9-26-9}, we get
\begin{equation}
\aligned
0\geq&(\beta a-\alpha C-C)w_{\xi\xi}-C.
\endaligned
\end{equation}
Then we can finish the proof by choosing $\beta$ large enough.


\section{Proof of Theorem 1.2}\label{Section 4}
In this section, we give a brief proof of Theorem \ref{1.2}.
Since we now have the derivative estimates up to second order, we can use the continuity method to prove our existence theorem.
\begin{proof}[Proof of Theorem \ref{Th1.2}.]
By the maximum modulus in Section \ref{Section 2} together with Theorem \ref{Th2.1}, we can derive a global second derivative H\"{o}lder estimate
\begin{equation}\label{global Holder}
|u|_{2,\alpha;\Omega}\le C,
\end{equation}
for elliptic solutions $u\in C^4(\Omega)\cap C^3(\bar \Omega)$ of the semilinear Neumann boundary value problem \eqref{1.1}-\eqref{1.2} for $0<\alpha<1$. The estimate \eqref{global Holder} is obtained in \cite{LT1986}, Theorem 3.2, (see also \cite{LieTru1986,Tru1984}). With this $C^{2,\alpha}$ estimate, we can use the method of continuity, (see \cite{GTbook}, Theorem 17.22, Theorem 17.28), to derive the existence of a solution $u\in C^{2,\alpha}(\bar \Omega)$. By virtue of the maximum principles (see \cite{GTbook}, Theorem 9.1, Theorem 9.6), the proof of Theorem \ref{Th1.1} carry over to solution $u\in W^{4,n}(\Omega)\cap C^3(\bar \Omega)$. Thus, from the Schauder theory, (see \cite{GTbook}, Section 6.7), we can improve $C^{2,\alpha}(\bar \Omega)$ solutions with $0<\alpha<1$ to be in spaces $W^{4,p}(\Omega)\cap C^{3,\delta}(\bar \Omega)$ for all $p<\infty$, $0<\delta<1$. The uniqueness is from the comparison principle in Section 2, see Lemma \ref{comparison}.
\end{proof}


\vspace{2mm} {\textbf{Acknowledgements.}} The authors would like to
express their gratitude to the referees for the careful reading of
the manuscript and their comments. This work was partially supported
by the NSF of China (Grant Nos. 11501184, 11401131 and 11101132).

Correspondence: Ni Xiang.



\end{document}